\newtheorem{theorem}{Theorem}[section]
\newtheorem{lemma}[theorem]{Lemma}
\newtheorem{remark}[theorem]{Remark}
\newtheorem{example}[theorem]{Example}
\newcommand{\N}{\mathbb{N}}
\newcommand{\Z}{\mathbb{Z}}
\newcommand{\R}{\mathbb{R}}
\newcommand{\E}{\mathbb{E}}
\newcommand{\D}{\mathcal{D}}
\newcommand{\F}{\mathcal{F}}
\newcommand{\PP}{\mathbb{P}}
\DeclareMathOperator{\spt}{spt}
\begin{document}

\title{Fourier dimension of Mandelbrot multiplicative cascades}

\author[C. Chen] {Changhao Chen}
\address{CC: Center for Pure Mathematics, School of Mathematical Sciences, Anhui University, Hefei 230601, China}
\email{chench@ahu.edu.cn}

\author[B. Li] {Bing Li}
\address{BL: Department of Mathematics, South China University of Technology, Guangzhou, 510641, P.R. China}
\email{scbingli@scut.edu.cn}

 \author[V. Suomala] {Ville Suomala}
\address{VS: Research Unit of Mathematical Sciences, P.O.Box 8000, FI-90014, University of
Oulu, Finland}
\email{ville.suomala@oulu.fi}

\begin{abstract} 
We investigate the Fourier dimension, $\dim_F\mu$, of Mandelbrot multiplicative cascade measures $\mu$ on the $d$-dimensional unit cube. We show that if $\mu$ is the cascade measure generated by a sub-exponential random variable, then 
\[\dim_F\mu=\min\{2,\dim_2\mu\}\,,\]
where $\dim_2\mu$  is the correlation dimension of $\mu$ and it has an explicit formula. 
  
For cascades on the circle $S\subset\R^2$, we obtain
\[\dim_F\mu\ge\frac{\dim_2\mu}{2+\dim_2\mu}\,.\]
\end{abstract}

\keywords{Mandelbrot multiplicative cascade, cascade measure, Fourier dimension, correlation dimension}
\subjclass[2020]{60G57 (Primary), 42B10, 28A80 (Secondary)}

\maketitle

\section{Introduction}
The Fourier transform is a classical concept in harmonic analysis and its applications. More recently, it has gained importance also in geometric measure theory due to its various connections and application in dimension theory, see \cite{Mattila2015}.
 One reason for this is that the asymptotics of the Fourier transform lead to a useful concept of fractal dimension known as \emph{Fourier dimension}.

In this paper, we investigate the Fourier transform of a well-known family of random measures arising as limits of the \emph{Mandelbrot multiplicative cascades}. A program for investigating the Fourier decay of such random \emph{cascade measures} was initiated by Mandelbrot \cite{Mandelbrot1976} in 1976 and, somewhat more precisely, by Kahane \cite{Kahane1993} in 1993. The problem has, until now, remained open apart from the special case of fractal percolation which was settled some years ago by Shmerkin and Suomala \cite{ShmerkinSuomala2018}. We note that for related random models there has been recent progress in the Mandelbrot-Kahane program; In particular, Falconer and Jin \cite{FalconerJin2019} provide lower-bounds for the Fourier dimension of the planar Gaussian multiplicative chaos and Garban and Vargas \cite{GarbanVargas} obtained similar results for the Gaussian multiplicative chaos defined on the circle $S\subset\R^2$.
 
 In our main result, we show that for multiplicative cascades on the $d$-dimensional unit cube generated by a sub-exponential random variable, the Fourier dimension of the cascade measure is positive and, moreover, equal to the correlation dimension, provided the value of the latter is at most two. We are not aware of earlier examples of multifractal measures whose Fourier dimension is explicitly known, see also the discussion following Remark 1 in \cite{GarbanVargas}.  We also extend our analysis to cascades defined on the circle and provide a lower bound for the Fourier dimension of such spherical cascades.

For a measure $\nu$ on an Euclidean space 
$\R^d$, the Fourier transform of $\nu$ at $\xi\in \R^d$ is defined as
\[
\widehat{\nu}(\xi)=\int_{\R^n} e^{-2\pi i x \cdot\xi} d\nu(x).
\]
A measure $\nu$ is a \emph{Rajchman measure}, if $\widehat{\nu}(\xi)\longrightarrow 0$ as $|\xi|\to\infty$. The Rajchman property alone is an important regularity characterisation of the measure being intimately related to various analytic and geometric properties of the measure and its support, see e.g. the surveys \cite{Lyons1995, Sahlsten}. If $\nu$ is Rajchman, it is also of interest to investigate the speed of decay of $\widehat{\nu}(\xi)$ as $|\xi|\to\infty$.
The \emph{Fourier dimension}, $\dim_F$, quantifies the optimal polynomial decay, if any:
\[
\dim_F \nu =\sup\left\{0<s< d\,:\,|\widehat{\nu}(\xi)|=O(|\xi|^{-s/2})\right\}\,.
\]

The Mandelbrot multiplicative cascade is a popular model of random geometry exhibiting multifractality and stochastic self-similarity. It was introduced now 50 years ago by Mandlebrot \cite{Mandelbrot1974} to model turbulence. On the real-line, the cascade gives rise to a random metric with rich dimensional properties involving phase transitions, see e.g. \cite{BenjaminiSchramm2009, Barraletal2014, FalconerTroscheit2023}.
Besides many applications e.g. in random energy models and quantum gravity (see \cite{BenjaminiSchramm2009, Barraletal2013, Barraletal2014} for references), the cascade has an interpretation as a branching random walk, see e.g. \cite{Jaffuel2012, Aidekon2013, Madaule2017}. It is also related to the Gaussian multiplicative chaos whose Fourier decay has been investigated for planar domains in \cite{FalconerJin2019} and, more recently, in the circle \cite{GarbanVargas}. As mentioned earlier, the problem of determining the Fourier dimension of the cascade measure may be traced back to Mandelbrot \cite{Mandelbrot1976} and Kahane \cite{Kahane1993}.

We follow the classical approach of Kahane and Peyri\`ere \cite{KahanePeyriere1976} and define the multiplicative cascade as a sequence of absolutely continuous random measures. Their weak limit, the \emph{cascade measure}, and its Fourier dimension are our primary objects. 
Let $W\ge 0$ be a random variable with $\E(W)=1$. Denoting by $\D_n$ the collection of the dyadic sub-cubes of $[0,1]^{d}$ of side length $2^{-n}$, we attach an independent copy $W_Q$ of $W$ to each $Q\in\mathcal{D}_n$, $n\in\N$. If $Q_1,\ldots, Q_{n-1}$ are the dyadic ancestors of $Q_n\in\mathcal{D}_n$, let
\[\mu_n(x)=\prod_{i=1}^nW_{Q_i}\] for all $x\in Q_n$. By heart, $\mu_n$ is a picewise constant function $[0,1]^{d}\to[0,+\infty[$,  but it also gives rise to a measure via the interpretation $d\mu_n(x)=\mu_n(x)\,dx$.
It is well known and easy to see that the sequence $\mu_n$ converges in the weak* topology to a random limit measure which we denote by $\mu$.

\begin{remark}
    For notational simplicity, we carry out the details in the case of dyadic cascades. Using $b$-adic cubes for any $b\ge 3$ is also possible and makes no difference in the arguments. Our method also generalizes to Mandelbrot cascades generated by a $b^d$-tuple of non-neqative random variables $(W_1,\ldots,W_{b^d})$ under the condition $\E\left(W_k\right)=1$ for all $1\le k\le b^d$. (See e.g. \cite{ColletKoukiou1992, Molchan1996, FengBarral2018} for the precise definition.)
\end{remark}

The cascade measure $\mu$ is non-degenerate if
\begin{equation}\label{eq:crit}
\E(W\log W)<d
\end{equation}
 and otherwise $\mu\equiv 0$ almost surely, where $\log$ here and in what follows denotes logarithm in base two. The Kahane-Peyri\'ere condition \eqref{eq:crit} is our standing assumption throughout. 
 It is well known (see \cite{KahanePeyriere1976, Heurteaux2016}), that
 \begin{equation}\label{eq:dimloc}
 \dim(\mu,x)=\lim_{r\downarrow 0}\frac{\log \mu(B(x,r))}{\log r}=d-\E(W\log W)\,,
 \end{equation}
 for $\mu$-almost all $x\in[0,1]^d$. In particular,
 $\dim_H \mu=d-\E(W\log W)$
almost surely on non-extinction, where $\dim_H\mu$ denotes the Hausdroff dimension of $\mu$.

The Fourier decay of the Mandelbrot cascades is well understood in the following extreme case, the \emph{fractal percolation:}. Given $0<p\le 1$, let
\begin{equation}\label{eq:W_fp}
W=\begin{cases}
\frac1p\,,&\text{ with probability }p\\
0\,,&\text{ with probability }1-p
\end{cases}.
\end{equation}
The set $E=\spt\mu$ is the fractal percolation limit set and $\mu$ is the natural fractal percolation measure.
The fractal percolation is non-degenerate for $2^{-d}<p\le 1$ and
$
\dim_H E=\dim_H\mu=d+\log p
$
almost surely on $E\neq\varnothing$.
 Shmerkin and Suomala \cite{ShmerkinSuomala2018} showed that if $0\le d+\log p\le 2$ then $\mu$  is a Salem measure so that
\[\dim_F\mu=\dim_F E=\dim_H E=d+\log p\,,\]
almost surely on $E\neq\varnothing$. Here the notations $\dim_H E, \dim_F E$ refer to the Hausdorff and Fourier dimensions of the set $E$, see \cite[\S 3.6]{Mattila2015}. 

For any cascade measure, the support of $\mu$ is a fractal percolation set with the parameter $p=\PP(W>0)$, but it is easily seen from \eqref{eq:dimloc} that $\dim_H\mu<\dim_H\spt\mu$ unless $W$ is of the form \eqref{eq:W_fp}. This reflects the fact that while the fractal percolation is strongly monofractal (a.s. the local dimension $\dim(\mu,x)$
equals $d+\log p$ for all $x\in\spt\mu$), general cascade measures are multifractal so that the level sets of the map $x\mapsto\dim(\mu,x)$ have a rich dimensional structure \cite{Molchan1996, Barral2000, Heurteaux2016}.
In this paper, we extend the analysis of the Fourier dimension to general (multifractal) cascades on the unit cube as well as to cascades defined on the circle. Throughout the paper, we assume that the initial random variable $W$ is 
sub-exponential, that is,
\[\PP\left(|W|>t\right)\le 2\exp(-ct)\text{ for all }t>0\]
holds for some $c>0$. 
We note that this assumption is only used in the proof of the lower bounds for $\dim_F\mu$ in Theorem \ref{thm:main'}. The results concerning upper bounds for $\dim_F\mu$ are obtained from multifractal analysis and they hold under much weaker assumptions on $W$.

The correlation dimension of a finite measure $\nu$ on $[0,1]^d$ is defined as
\[
\dim_2 \nu =\liminf_{n\rightarrow \infty} \frac{\log \sum_{Q\in \D_n} \nu(Q)^{2}}{-n}\,.
\]
Alternatively, by \cite[Proposition 2.1]{HK1997},  this can be written as
\begin{equation*}
\dim_2\nu=\sup\left\{0\le s< d\,:\,\int\int  |x-y|^{-s} d\nu(x)d\nu(y)<\infty\right\}\,.
\end{equation*}
For any $0<s<d$, by \cite[Theorem 3.10]{Mattila2015}, there exists a constant $c(d, s)>0$ such that
\[
\int\int  |x-y|^{-s} d\nu(x)d\nu(y)=c(d, s)\int_{\R^d}|\widehat{\nu}(\xi)|^2|\xi|^{s-d}\,d\xi,
\]
revealing the generally valid estimate
\begin{equation}\label{eq:fourierL2}
\dim_F \nu \le \dim_2 \nu.
\end{equation}
For most measures, the above inequality is strict. If \eqref{eq:fourierL2} holds as an equality, and if the joint value of $\dim_F$ and $\dim_2$ is positive, this is an important regularity property for the measure. Note also that, apart from fractal percolation, the cascade measures satisfy $\dim_2\mu<\dim_H\mu$ reflecting the multifractality of $\mu$.

In our main theorem, we show that for a sub-exponential generating random variable, the Fourier dimension and the correlation dimension of the cascade measure agree if $\dim_2\mu\le 2$. Before stating the result, let us briefly discuss the proposed value of $\dim_2\mu$. For $p\ge 1$, let 
\[W_p=\frac{W^p}{\E(W^p)}\] 
and denote
\begin{equation} \label{eq:alpha-W}
\alpha(W) =
\begin{cases}
d-\log \E(W^2) & \text{ if } \E(W_2\log W_2)<d; \\
 \frac{2d(p-1)-2\log \E(W^p)}{p}   & \text{ if } \E(W_2\log W_2)\ge d,
\end{cases}
\end{equation}
where $1< p\le 2$ satisfies $\E(W_p\log W_p)=d$. We note that this value is unique, see the arXiv version of \cite{Jaffuel2012}.
Thus, $\alpha(W)$ is given by $d-\log\E(W^2)$ when the cascade corresponding to $W_2$ is subcritical. If $W_2$ gives rise to a supercritical cascade, $p$ is chosen so that $W_p$ is critical and then $\alpha(W)$ equals $(2d(p-1)-2\log\E(W^p))/p$.

\begin{theorem}\label{thm:main}
If $\mu$ is the cascade measure corresponding to a sub-exponential generating random variable $W$, then $\dim_2(\mu)=\alpha(W)$ and
\[\dim_F\mu=\min\{2,\alpha(W)\}\] 
almost surely on non-extinction. 
\end{theorem}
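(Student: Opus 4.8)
The plan is to deduce Theorem~\ref{thm:main} from two statements, each holding almost surely on non-extinction: (I)~$\dim_2\mu=\alpha(W)$; (II)~$\dim_F\mu=\min\{2,\dim_2\mu\}$. Statement~(I) is multifractal analysis of the cascade and does not involve the Fourier transform. Writing $\tau(q)=d(q-1)-\log\E(W^q)$ for the formal $L^q$-spectrum, the branching structure gives $\E\bigl(\sum_{Q\in\D_n}\mu(Q)^q\bigr)=\E\bigl(\mu([0,1]^d)^q\bigr)\,2^{-n\tau(q)}$. Let $q_c\in(1,\infty]$ be the exponent at which the tangent to the concave curve $\tau$ meets the origin; equivalently $\E(W_{q_c}\log W_{q_c})=d$, with uniqueness of $q_c$ (and of $p$ when $q_c\le 2$) following from \eqref{eq:crit} as in \cite{Jaffuel2012}. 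The standard multifractal estimates for cascades (\cite{Molchan1996,Barral2000,Heurteaux2016}) then give that, almost surely on non-extinction, $\sum_{Q\in\D_n}\mu(Q)^q=2^{-n\rho(q)+o(n)}$, where $\rho$ agrees with $\tau$ on $[1,q_c]$ and with its linear extension $q\mapsto q\tau'(q_c)$ beyond; in particular $\dim_2\mu=\rho(2)$, which equals $d-\log\E(W^2)$ if $\E(W_2\log W_2)\le d$ (i.e. $q_c\ge2$) and equals $\bigl(2d(q_c-1)-2\log\E(W^{q_c})\bigr)/q_c$ with $q_c\in(1,2)$ otherwise. By \eqref{eq:alpha-W}, this is $\alpha(W)$ (the two cases overlapping when $\E(W_2\log W_2)=d$).

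For the upper bound in (II), $\dim_F\mu\le\dim_2\mu$ is \eqref{eq:fourierL2}, so only $\dim_F\mu\le2$ in the case $\dim_2\mu>2$ remains. Here we use the self-similarity identity $\widehat{\mu}(\xi)=2^{-d}\sum_{v\in\{0,1\}^d}W_v e^{-\pi i v\cdot\xi}\,\widehat{\mu^{(v)}}(\xi/2)$ with $\mu^{(v)}$ independent copies of $\mu$: taking conditional expectation given the first-generation weights and computing the variance produces the finite quantity $2^{-d}\operatorname{Var}(W)\,|m(\xi/2)|^2$, where $m=\widehat{1_{[0,1]^d}}$. Since $m(Ne_1)=0$ and $|m(Ne_1/2)|^2\asymp N^{-2}$ for $N$ an odd integer, a second moment argument — applied, if $\mu$ has infinite second moment, to a truncated cascade that does not — gives $\PP(|\widehat{\mu}(Ne_1)|\ge cN^{-1})\ge c'>0$ uniformly over odd $N$, and a zero--one law then yields $\limsup_{N}N\,|\widehat{\mu}(Ne_1)|>0$ almost surely on non-extinction, hence $\dim_F\mu\le2$.

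The lower bound $\dim_F\mu\ge\min\{2,\dim_2\mu\}$ is the core. Fix $s<\sigma<\min\{2,\dim_2\mu\}$. Since $|\widehat{\mu}(\xi)-\widehat{\mu}(\eta)|\le2\pi\sqrt d\,\mu([0,1]^d)\,|\xi-\eta|$, it suffices to bound $|\widehat{\mu}(\xi)|$ over a polynomially fine net of each dyadic annulus $\{2^n\le|\xi|<2^{n+1}\}$, and hence, by Markov's inequality and Borel--Cantelli, to prove for each $k\in\N$ a bound
\[\E\bigl(|\widehat{\mu}(\xi)|^{2k}\,;\,\Omega_n\bigr)\le C_k\,(\log|\xi|)^{C_k}\,|\xi|^{-k\sigma}\qquad\text{uniformly for }|\xi|\asymp2^n,\]
where $\Omega_n$ has probability $1-o(2^{-cn})$ and negligible contribution off it; one then takes $k$ large, depending on $\sigma-s$ and $d$. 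To prove this, at the scale $n\approx\log_2|\xi|$ we write $\widehat{\mu}(\xi)=\sum_{Q\in\D_n}\mu_n(Q)\,e^{-2\pi i a_Q\cdot\xi}\,\widehat{\mu^{(Q)}}(2^{-n}\xi)$, where $|2^{-n}\xi|\asymp1$ and the factors $\widehat{\mu^{(Q)}}(2^{-n}\xi)$ are independent, and combine this with the martingale structure of $j\mapsto\widehat{\mu_j}(\xi)$ inside each cube; for $\Omega_n$ we take the event that all ancestral products $\prod_{i\le m}W_{Q_i}$, $m\le n$, stay below $2^{m\gamma}$ for a threshold $\gamma$ slightly above $\E(W\log W)$, its complement being controlled by large deviation bounds for the walk $m\mapsto\sum_{i\le m}\log W_{Q_i}$ — this is the sole place the sub-exponential hypothesis enters, via finiteness of the exponential moments of $\log W$. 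Expanding $|\widehat{\mu}(\xi)|^{2k}$ and using independence of the subtrees, one estimates the contribution of each way of grouping the $2k$ indices along the dyadic tree; the dominant terms are governed by the interplay between the number of free subtrees at each level and the cancellation in the phases $e^{-2\pi i a_Q\cdot\xi}$, and the bookkeeping yields the exponent $\min\{2,\dim_2\mu\}$ — the restriction to $\Omega_n$ being what replaces the over-counted formal value $d-\log\E(W^2)$ by the correct $\alpha(W)$ when $q_c<2$, and the $2$ arising, as in the preceding paragraph, from the slow decay of $m=\widehat{1_{[0,1]^d}}$ along coordinate axes.

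The main obstacle is this moment estimate: one must perform the truncation compatibly with the self-similar recursion — so that the subtree transforms $\widehat{\mu^{(Q)}}$ appearing in the scale-$n$ decomposition again have finite moments of all orders — while keeping the discarded mass negligible uniformly in $n$, and then control the $2k$-fold oscillatory sums over the dyadic tree in the genuinely multifractal, unbounded-weight setting, which is not reached by the fractal percolation analysis of \cite{ShmerkinSuomala2018} (there $W$ is bounded and $\mu$ is monofractal).
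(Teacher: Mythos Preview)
Your treatment of (I) matches the paper's Lemma~\ref{lem:dim_2}. For $\dim_F\mu\le 2$ the paper takes a different route: it shows that the projection of $\mu$ onto a coordinate axis cannot have a continuous density (via a martingale/self-similarity argument for $X_n=2^n\mu(\pi^{-1}([0,2^{-n}]))$), which forces $\dim_F\mu\le 2$ since $\dim_F>2$ would make every one-dimensional projection have a continuous density. Your second-moment argument along odd integer frequencies is plausible but the zero--one step is not given.

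The real gap is in the lower bound. By your own final paragraph the $2k$-th moment estimate is only a plan: the truncation event $\Omega_n$, its compatibility with the recursion, and the combinatorics of the $2k$-fold tree sums are all left open. The paper avoids high moments entirely by working with the \emph{increments}
\[
\widehat{\mu_{n+1}}(\xi)-\widehat{\mu_n}(\xi)=\sum_{Q\in\D_n}X_{Q,\xi},\qquad X_{Q,\xi}=\int_Q(\mu_{n+1}-\mu_n)e^{-2\pi i x\cdot\xi}\,dx,
\]
which, conditional on $\F_n$, are independent with mean zero and $\|X_{Q,\xi}\|_{\Psi_1}\le C\,\mu_n(Q)$ (since $X_{Q,\xi}$ is $\mu_n(Q)$ times a bounded linear combination of the $W_{Q'}-1$). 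Bernstein's inequality then gives
\[
\PP\bigl(|\widehat{\mu_{n+1}}(\xi)-\widehat{\mu_n}(\xi)|>2^{\varepsilon n}S_n^{1/2}\bigr)\le C\exp(-c\,2^{\varepsilon n}),\qquad S_n=\sum_{Q\in\D_n}\mu_n(Q)^2,
\]
a union bound over a net of $|\xi|\lesssim 2^n$ together with the Lipschitz property of $\widehat{\mu_n}$ and Borel--Cantelli yields $|\widehat{\mu_{n+1}}(\xi)-\widehat{\mu_n}(\xi)|=O(2^{-n\tau/2})$ for all $|\xi|\le 2^{n+1}$ and any $\tau<\alpha(W)$, since $S_n=2^{-n\alpha(W)+o(n)}$ almost surely by (I). Telescoping finishes the proof. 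Two points deserve emphasis. First, the sub-exponential hypothesis enters through Bernstein applied to $W-1$, not through large deviations for $\sum\log W_{Q_i}$ as you suggest. Second, the correct exponent $\alpha(W)$ --- including the ``linear extension'' regime $q_c<2$ --- is already encoded in the almost-sure asymptotics of the single $L^2$ quantity $S_n$ coming from (I); no truncation event $\Omega_n$ and no moment beyond the second is needed. This is precisely the SI-martingale idea of \cite{ShmerkinSuomala2018}, and it sidesteps the obstacle you identify.
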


For the sake of concreteness, we present examples of Theorem \ref{thm:main} when applied to simple two point distributions $W$.
\begin{example}\label{ex:lambada} 
Given $0<\lambda<1$, we set
\begin{equation}
W=\begin{cases}
1+\lambda\,&\text{ with probability }\tfrac{1}{2}\\
1-\lambda\,&\text{ with probability }\tfrac{1}{2}
\end{cases}.
\end{equation}
Then almost surely $\dim_F \mu=\min\{2, d-\log (1+\lambda^2)\}$. In particular, $\dim_F \mu=2$ when $d\ge 3$.
\end{example}
\begin{proof}
By Theorem \ref{thm:main}, it is sufficient to show that $\E(W_2\log W_2)<1$, which is equivalent to
\begin{equation}\label{eq:lambda}
\E(W^2 \log W^2) <(1+\lambda^2)\log\left(2(1+\lambda^2)\right)\,.
\end{equation}
 Write  
$\E(W^2\log W^2)=\frac12(1+\lambda)^2\log ((1+\lambda)^2)+ \frac12(1-\lambda)^2 \log ((1-\lambda)^2)$.
Noting that $x\mapsto x\log x$ is increasing on $[1,\infty[$, we observe that $(1+\lambda)^2\log((1+\lambda)^2)\le 2(1+\lambda)^2\log(2(1+\lambda)^2)$. Since $(1-\lambda)^2\log((1-\lambda)^2)<0$, this verifies \eqref{eq:lambda}.
\end{proof}

In the above example, $E(W_2\log W_2)<1$ which allows a simple closed form expression for $\dim_F\mu$. For more general two-point distributions, both alternatives in \eqref{eq:alpha-W} are possible. 
\begin{example}
\begin{equation*}
W=\begin{cases}
3\,&\text{ with probability }\tfrac{1}{4}\\
\tfrac13\,&\text{ with probability }\tfrac{3}{4}
\end{cases}\,,
\end{equation*}
then $E(W_2\log W_2)>1$ and $E(W_p\log W_p)=1$ when $p\approx 1.162$. Applying Theorem \ref{thm:main} for $d=1$, we obtain $\dim_F(\mu)\approx 0.0301$ while $\dim_H\mu=1-\tfrac{\log 3}{\log 4}\approx 0.208$.
\end{example}

Garban and Vargas \cite{GarbanVargas} investigate the Fourier transform of random multifractal measures on the circle. They point out the lack of examples of such multifractal measures with non-trivial Fourier decay. Investigating the GMC on the circle, they show that the GMC measure is Rajchman for all relevant values of the parameter and provide a lower bound for the Fourier dimension for certain parameter values. Our analysis of the cascade measures can also be performed on the circle to provide quantitative lower bounds for the Fourier dimension. The cascade measure on the unit circle $S\subset\R^2$ is obtained as the push-forward $\widetilde{\mu}\circ f^{-1}$, where $f\colon[0,1]\to S$, $t\mapsto (\cos (2\pi t),\sin(2\pi t))$ and $\widetilde{\mu}$ is the cascade measure on the unit interval.  

\begin{theorem}\label{thm:circle}
Let $\mu$ be the cascade measure on the circle $S\subset\R^2$. Then
\[\frac{\alpha}{2+\alpha}\le \dim_F\mu\le \alpha\,,\]
almost surely on non-extinction, where $\alpha=\alpha(W)=\dim_2\mu$ is defined through \eqref{eq:alpha-W} for $d=1$.
\end{theorem}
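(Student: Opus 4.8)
The plan is to prove the two bounds separately, the upper one being routine. For $\dim_F\mu\le\alpha$: the map $f\colon[0,1)\to S$, $t\mapsto(\cos 2\pi t,\sin 2\pi t)$, is a smooth bijection with $|f(t)-f(s)|$ comparable to the distance of $t$ and $s$ in $\R/\Z$, so the Riesz energies $\iint|x-y|^{-s}\,d\mu(x)\,d\mu(y)$ and $\iint|t-s|^{-s}\,d\widetilde\mu(t)\,d\widetilde\mu(s)$ are finite for the same range $s\in(0,1)$; the only discrepancy, coming from pairs with $t$ near $0$ and $s$ near $1$, contributes a bounded term because $|f(t)-f(s)|$ stays away from $0$ there. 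Hence $\dim_2\mu=\dim_2\widetilde\mu=\alpha$ by Theorem \ref{thm:main} with $d=1$, and \eqref{eq:fourierL2} gives the claim.

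For the lower bound, fix $\xi$ with $R:=|\xi|$ large and $\xi=R(\cos\vartheta,\sin\vartheta)$, so that
\[\widehat\mu(\xi)=\int_0^1 e^{-2\pi iR\cos(2\pi t-\vartheta)}\,d\widetilde\mu(t),\]
an oscillatory integral whose phase $g(t)=R\cos(2\pi t-\vartheta)$ has exactly two stationary points, both nondegenerate with $|g''|\asymp R$ there. One cannot feed the interval estimate $|\widehat{\widetilde\mu}(\eta)|\lesssim_\varepsilon|\eta|^{-\alpha/2+\varepsilon}$ (valid by Theorem \ref{thm:main}, since $\alpha\le 1\le 2$) directly into this integral, because $g$ is genuinely non-affine (curvature $\asymp R$) on every interval of length $\gtrsim R^{-1/2}$; the idea is instead to exploit the self-similarity of the cascade. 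Resolve $\widetilde\mu$ to dyadic scale $2^{-n}\asymp R^{-1/2}$, writing $\widetilde\mu|_I=W_I\,|I|\,(\phi_I)_*\widetilde\mu^{(I)}$ for $I\in\D_n$, with $W_I=\prod_{i=1}^n W_{Q_i}$, $\phi_I$ the affine map $[0,1]\to I$, and the $\widetilde\mu^{(I)}$ i.i.d.\ copies of $\widetilde\mu$ independent of the family $(W_I)$. Taylor-expanding $g\circ\phi_I$ (the quadratic coefficient is now $O(1)$ and the cubic term is $O(R^{-1/2})$) gives
\[\widehat\mu(\xi)\approx\sum_{I\in\D_n}W_I\,|I|\,e^{-2\pi ic_I}\int_0^1 e^{-2\pi i(\eta_I u+\gamma_I u^2)}\,d\widetilde\mu^{(I)}(u),\qquad |\eta_I|\lesssim R^{1/2},\ |\gamma_I|\lesssim 1 .\]
Here $|\eta_I|\asymp R^{1/2}$ for most $I$, but drops to $\asymp 1$ for the $O(1)$ intervals adjacent to a stationary point. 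For $I$ away from the stationary points the (bounded-chirp) integral is $\lesssim\min\{\|\widetilde\mu^{(I)}\|,\,C_\varepsilon|\eta_I|^{-\alpha/2+\varepsilon}\}$, obtained by expanding the harmless factor $e^{-2\pi i\gamma_I u^2}$ against $\widehat{\widetilde\mu^{(I)}}$ and applying Theorem \ref{thm:main} to $\widetilde\mu^{(I)}$; for the finitely many $I$ next to a stationary point one uses the trivial bound together with the uniform ball estimate
\[\widetilde\mu(B(x,r))\le\Bigl(\textstyle\sum_{Q\in\D_m}\widetilde\mu(Q)^2\Bigr)^{1/2}\lesssim_\varepsilon r^{\alpha/2-\varepsilon},\]
which follows directly from $\dim_2\widetilde\mu=\alpha$. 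Summing over $I$, grouped dyadically in $|\eta_I|$, and using that each dyadic group occupies a localized set whose $\widetilde\mu$-measure is controlled by the ball estimate, that $\sum_I W_I|I|=\widetilde\mu_n([0,1])$ is a.s.\ bounded, and that $\sum_I\widetilde\mu(I)=\mu(S)$ is a.s.\ finite, one arrives after optimization at $|\widehat\mu(\xi)|\lesssim_\varepsilon R^{-\frac{\alpha}{2(2+\alpha)}+\varepsilon}$ for each fixed $\xi$.

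Finally, to obtain the a.s.\ statement for all large $|\xi|$ simultaneously, I would run the same estimates for a high even moment $\E\bigl(|\widehat\mu(\xi)|^{2k}\bigr)$ — the independence of the $\widetilde\mu^{(I)}$ makes the moment bound a genuine improvement over the first-moment one, exactly as in the proof of Theorem \ref{thm:main}, and this is where sub-exponentiality of $W$ (finiteness of all moments of $W$) is used — then combine with the crude Lipschitz bound $|\widehat\mu(\xi)-\widehat\mu(\xi')|\le 2\pi\mu(S)\,|\xi-\xi'|$ and a Borel--Cantelli argument over a polynomially fine net in each annulus $\{|\xi|\asymp 2^\ell\}$. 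I expect the main obstacle to be the contribution of the $O(1)$ pieces adjacent to the two stationary points, where no oscillatory gain from $\widetilde\mu$ is available and one must fall back on the ball estimate: it is precisely the competition between this term and the "bulk" term that degrades the exponent from $\alpha$ to $\tfrac{\alpha}{2+\alpha}$. Carrying the many error terms of the Taylor expansion through the resolution step while retaining enough independence to close the moment estimate is the other technically delicate point.
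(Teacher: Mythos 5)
Your upper bound is correct and coincides with the paper's: $f$ is locally bi-Lipschitz as a map from $\R/\Z$, so $\dim_2\mu=\dim_2\widetilde\mu=\alpha$, and \eqref{eq:fourierL2} finishes. The lower bound, however, is where the content lies, and your stationary-phase scheme has genuine gaps that prevent it from being a proof.

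First, the uniformity over the pieces. Your key step bounds $\bigl|\int_0^1 e^{-2\pi i(\eta_I u+\gamma_I u^2)}\,d\widetilde\mu^{(I)}(u)\bigr|$ by invoking Theorem \ref{thm:main} for each copy $\widetilde\mu^{(I)}$. But Theorem \ref{thm:main} only gives $|\widehat{\widetilde\mu^{(I)}}(\eta)|\le C_I|\eta|^{-\alpha/2+\varepsilon}$ with a \emph{random} constant $C_I$, and as $|\xi|\to\infty$ you are taking a maximum over an unbounded family of independent copies; without a tail bound on the distribution of $C_I$ (which nothing in the paper or your sketch provides), $\sup_I C_I$ is a.s.\ infinite and the sum over $I$ is not controlled. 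Your proposed remedy --- estimating $\E\bigl(|\widehat\mu(\xi)|^{2k}\bigr)$ for large $k$ --- is not ``exactly as in the proof of Theorem \ref{thm:main}'': that proof never computes moments of $\widehat\mu$; it applies Bernstein's inequality to the increments $\widehat{\mu_{n+1}}(\xi)-\widehat{\mu_n}(\xi)$ conditionally on $\F_n$. High moments of $\widehat\mu$ itself for a multifractal cascade are a substantially harder object (essentially what one would need to prove a Salem-type statement), and you give no indication of how to compute them. Second, the exponent is asserted rather than derived. If one actually runs your bookkeeping at scale $2^{-n}\asymp R^{-1/2}$ --- grouping the $I$ with $\mathrm{dist}(I,\text{stationary point})\asymp 2^{-j}$, whose total $\mu_n$-mass is $\lesssim 2^{-j\alpha/2}$ by your ball estimate, against $|\eta_I|^{-\alpha/2}\asymp(R^{1/2}2^{-j})^{-\alpha/2}$ --- the bulk sums to $O\bigl((\log R)\,R^{-\alpha/4}\bigr)$ and the stationary pieces contribute $O(R^{-\alpha/4})$, which is \emph{not} $R^{-\alpha/(2(2+\alpha))}$. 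So either hidden losses (the chirp expansion, the Taylor remainders, the random constants) have not been identified, or the claimed output does not follow from the claimed steps; in either case the central quantitative claim is unproved.

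For comparison, the paper's route avoids all of this. It reuses the martingale-increment telescoping from Theorem \ref{thm:main'} verbatim, observing that the net of frequencies in the Borel--Cantelli step can be enlarged to $|\xi|\le 2^{nR}$ for any fixed $R$ at the cost of only a polynomial factor in \eqref{eq:B_n}; this gives $|\widehat{\mu_{n+1}}(\xi)-\widehat{\mu_n}(\xi)|=O(2^{-\tau n/2})$ on that whole range. The circle geometry enters only through the deterministic van der Corput bound applied to the absolutely continuous approximation $\mu_n$ (a bounded density on arcs), yielding $|\widehat{\mu_n}(\xi)|=O(2^n|\xi|^{-1/2})$, and the exponent $\tau/(4+2\tau)$ comes from balancing these two by starting the telescoping at $n_\xi$ with $|\xi|\asymp 2^{n_\xi(2+\tau)}$. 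No decomposition of the limit measure, no stationary-phase analysis, and no moment computation is needed.
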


This theorem follows by a straightforward modification of the proof of Theorem \ref{thm:main} and by the use of Van der Corput's lemma.  Although we present the details only for the circle, our method works also for certain cascades defined on more general surfaces in $\R^d$ with non-zero Gaussian curvature, providing lower bounds for their Fourier dimension.

The paper is organized as follows. In Section \ref{sec:prel}, we set up some notation and provide some auxiliary results. We also recall how the formulas in \eqref{eq:alpha-W} for the correlation dimension of $\mu$ are derived from the multifractal analysis of the cascades.
The Theorem \ref{thm:main} is proved in Section \ref{sec:proofmain} using the auxiliary tools from Section \ref{sec:prel} and utilizing the SI-martingale technique developed in \cite{ShmerkinSuomala2018}. Theorem \ref{thm:circle} is proved in the final Section \ref{sec:proofcircle}.

\begin{remark}
While preparing this article for publication, we learned that Chen, Han, Qiu, and Wang \cite{CHQW} have obtained related results on the Fourier dimension of Mandelbrot cascades using completely different methods. The paper \cite{CHQW} and this work are independent of each other and were made available on the arXiv on the same day.
\end{remark}

\section{Preliminaries}\label{sec:prel}

We denote by $\F_n$ the sigma-algebra generated by the random variables $W_J$, $J\in\cup_{j=1}^{n}\D_j$. We also denote $\mathcal{D}=\cup_n\mathcal{D}_n$. We use the familiar big-$O$ notation: For functions $f\ge 0$ and $g>0$, we write $f=O(g)$ if the ratio $\tfrac{f}g$ is bounded.  Depending on the context, the $O$ constants could be either random or deterministic. All logarithms are to base $2$.

We define the sub-exponential norm of a random variable $X$ as
\[||X||_{\Psi_1}=\inf\{c\ge 0\,:\,\E\left(\exp\left(|X|/c\right)\right)\le 2\}\,.\]
In our treatment of the cascade measures, we assume that $W$ is sub-exponential, i.e. $||W||_{\Psi_1}<\infty$
in order to apply the following Bernstein's inequality for sub-exponential random variables, see \cite[Theorem  2.8.1]{Vershynin2018}.
\begin{lemma}\label{lem:HoeffdingSG}
Let $X_{i}, i\in I$ be a sequence of independent sub-exponential
random variables with zero mean. Then, for all $\kappa>0$,
\[
\PP\left( \Big| \sum_{i\in I}   X_{i} \Big| >\kappa\right) \leq 2 \exp \left(-c \min\left\{\frac{\kappa^{2}}{\sum_{i\in I}||X_i||^2_{\Psi_1}}\,,\, \frac{\kappa}{\max_{i\in I}||X_i||_{\Psi_1}}\right\}\right)\,,
\]
where $c>0$ is an absolute constant.
\end{lemma}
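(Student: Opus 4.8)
The plan is to prove Lemma \ref{lem:HoeffdingSG} by the classical exponential-moment (Chernoff) method, the real work being an estimate for the moment generating function of a \emph{single} centred sub-exponential variable. Write $S=\sum_{i\in I}X_i$ and $K_i=\|X_i\|_{\Psi_1}$. First I would reduce to a one-sided tail: since $-X_i$ is again centred and sub-exponential with $\|-X_i\|_{\Psi_1}=K_i$, any bound on $\PP(S>\kappa)$ yields the identical bound on $\PP(-S>\kappa)$, and a union bound $\PP(|S|>\kappa)\le\PP(S>\kappa)+\PP(-S>\kappa)$ supplies the factor $2$ in front of the exponential. It thus suffices to control $\PP(S>\kappa)$ for each fixed $\kappa>0$.

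For the one-sided tail, for any $\lambda>0$ Markov's inequality applied to $e^{\lambda S}$ gives, by independence,
\[
\PP(S>\kappa)\le e^{-\lambda\kappa}\,\E\!\left(e^{\lambda S}\right)=e^{-\lambda\kappa}\prod_{i\in I}\E\!\left(e^{\lambda X_i}\right)\,.
\]
The crux is the single-variable estimate: there are absolute constants $c_0,C>0$ such that whenever $X$ is centred with $\|X\|_{\Psi_1}=K$ and $|\lambda|\le c_0/K$, one has $\E(e^{\lambda X})\le e^{C\lambda^2K^2}$. I would prove this by Taylor-expanding the exponential. The definition of the $\Psi_1$-norm translates into the moment growth $\E|X|^p\le(C_1pK)^p$, and the zero-mean hypothesis annihilates the linear term, so that $\E(e^{\lambda X})=1+\sum_{p\ge 2}\lambda^p\E(X^p)/p!$; using $p!\ge(p/e)^p$ each summand is bounded by $(C_1eK|\lambda|)^p$, so for $|\lambda|K$ small enough the series is geometric and sums to at most $C\lambda^2K^2$. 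The elementary inequality $1+t\le e^t$ then gives the claimed bound.

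Substituting this into the product, which is legitimate provided $\lambda\le c_0/\max_i K_i$, yields
\[
\PP(S>\kappa)\le\exp\!\left(-\lambda\kappa+C\lambda^2\textstyle\sum_{i\in I}K_i^2\right)\,.
\]
It remains to optimise over $\lambda\in(0,\,c_0/\max_iK_i]$. The unconstrained minimiser of the exponent is $\lambda^*=\kappa/(2C\sum_iK_i^2)$. If $\lambda^*$ is admissible, substituting it produces the sub-Gaussian exponent $-\kappa^2/(4C\sum_iK_i^2)$; if not, i.e.\ $\lambda^*>c_0/\max_iK_i$, I would instead take $\lambda=c_0/\max_iK_i$ and verify that in this regime the linear term dominates, giving the sub-exponential exponent $-c_0\kappa/(2\max_iK_i)$. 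Exactly one regime occurs for each $\kappa$, so the exponent is at most a constant multiple of $-\min\{\kappa^2/\sum_iK_i^2,\ \kappa/\max_iK_i\}$, which is the asserted inequality after absorbing all constants into a single absolute $c>0$.

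The step I expect to be the main obstacle is the single-variable MGF estimate: one must convert the defining property of the $\Psi_1$-norm into the quantitative moment bound $\E|X|^p\le(C_1pK)^p$, control the Taylor tail uniformly, and — crucially — ensure that $c_0$ and $C$ are genuinely \emph{absolute}, so that they do not degrade when the bound is multiplied over the entire (possibly large) index set $I$. Given that estimate, the two-regime optimisation at the end is routine casework.
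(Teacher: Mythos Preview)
Your proposal is correct and is precisely the standard Chernoff-method proof of Bernstein's inequality for sub-exponential variables. The paper does not give its own proof of this lemma; it simply cites \cite[Theorem 2.8.1]{Vershynin2018}, and the argument there is exactly the one you outline (moment bound from the $\Psi_1$-norm, Taylor expansion to control the MGF of a single centred variable for small $|\lambda|K$, then the two-regime optimisation), so your approach coincides with the cited source.
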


Note that we allow $\PP(W=0)>0$ to cover also cascade measures with fractal support, see \eqref{eq:W_fp}. Despite \eqref{eq:crit}, it is thus possible that $\PP(\mu([0,1]^d)=0)>0$. Naturally, we are only interested in the event of \emph{non-extinction}, i.e. $\mu([0,1]^d)>0$. 
For technical reasons, we also exclude the trivial case $W\equiv 1$. 
We recall the following lemma, which in particular implies that the correlation dimension of $\mu$ equals $\alpha(W)$ almost surely on non-extinction.

\begin{lemma}\label{lem:dim_2}
Let  $\alpha(W)$  be given by  \eqref{eq:alpha-W}. Then, almost surely on non-extinction,
\[\lim_{n\to\infty}\frac{\log\sum_{Q\in\D_n}\mu_n(Q)^2}{-n}=\lim_{n\to\infty}\frac{\log\sum_{Q\in\D_n}\mu(Q)^2}{-n}=\alpha(W)\,.
\]    
\end{lemma}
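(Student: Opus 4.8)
The plan is to work with $S_n:=\sum_{Q\in\D_n}\mu_n(Q)^2=2^{-2nd}\sum_{Q\in\D_n}Z_Q^2$, where $Z_Q:=\prod_{i=1}^nW_{Q_i}$, and with $\widetilde S_n:=\sum_{Q\in\D_n}\mu(Q)^2$, and to prove that $\tfrac1n\log S_n\to-\alpha(W)$ and $\tfrac1n\log\widetilde S_n\to-\alpha(W)$ almost surely on non-extinction. Writing $\beta(q):=\log\E(W^q)$, which is convex with $\beta(1)=0$, one has $\E(W_q\log W_q)=q\beta'(q)-\beta(q)$, and this function is nondecreasing in $q$ (its derivative equals $q\beta''(q)\ge0$) with value $\E(W\log W)<d$ at $q=1$; this is exactly what produces the dichotomy in \eqref{eq:alpha-W}, discussed in more detail in the derivation of that formula: either $q\beta'(q)-\beta(q)<d$ throughout $[1,2]$, in which case $W_2$ generates a non-degenerate cascade and $\alpha(W)=d-\beta(2)$, or there is a unique critical exponent $p\in(1,2]$ (uniqueness: arXiv version of \cite{Jaffuel2012}) with $p\beta'(p)=d+\beta(p)$, in which case $\alpha(W)=\tfrac2p(d(p-1)-\beta(p))=2d-2\beta'(p)$. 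Finally, for $Q\in\D_n$ one has $\mu(Q)=\mu_n(Q)\Omega_Q$ where, conditionally on $\F_n$, the $\Omega_Q$ are i.i.d.\ copies of $\Omega:=\mu([0,1]^d)$; note $\PP(\Omega>0)>0$ by \eqref{eq:crit}.

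For the upper bounds on $\limsup_n\tfrac1n\log S_n$ and $\limsup_n\tfrac1n\log\widetilde S_n$ (equivalently, $\dim_2\mu\ge\alpha(W)$) I would use a first-moment argument. Fix $1<q\le2$ with $q\beta'(q)-\beta(q)<d$, namely $q=2$ in the first case and $q\uparrow p$ in the second. Then $\beta(q)<d(q-1)$ by convexity, so $\E(\Omega^q)<\infty$ (the standard moment bound for cascades), whence $\E\!\left(\sum_{Q\in\D_n}\mu(Q)^q\right)=O\!\left(2^{n(d+\beta(q)-qd)}\right)$ and the same for $\mu_n$. By Markov's inequality and Borel--Cantelli, $\sum_{Q\in\D_n}\mu(Q)^q\le 2^{n(d+\beta(q)-qd)+\varepsilon n}$ for all large $n$, and $\ell^q\hookrightarrow\ell^2$ gives $\sum_{Q\in\D_n}\mu(Q)^2\le\left(\sum_{Q\in\D_n}\mu(Q)^q\right)^{2/q}$; letting $\varepsilon\downarrow0$ (and $q\uparrow p$ in the second case) yields the bound $-\alpha(W)$. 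This direction requires neither genericity of $W$ nor the non-extinction hypothesis.

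For the matching lower bound on $S_n$, which is where non-extinction is used, I would treat the two regimes separately. If $W_2$ generates a non-degenerate cascade, then $S_n=2^{-n\alpha(W)}M_n$, where $M_n$ is the cascade martingale generated by $W_2$; this martingale converges almost surely to a limit which is strictly positive precisely on $\{\spt\mu\ne\varnothing\}$, and since that event depends only on $\{W_Q>0\}$ it is the non-extinction event, so $\tfrac1n\log S_n\to-\alpha(W)$. If $W_2$ generates a degenerate cascade, I would invoke Biggins' theorem on the rightmost particle of a branching random walk, applied to $Q\mapsto\sum_{i=1}^n\log_2W_{Q_i}$ on the $2^d$-ary tree with branches killed where $W=0$: since $\sup\{a:\sup_\theta(\theta a-\beta(\theta))\le d\}=\beta'(p)$, this gives $\tfrac1n\max_{Q\in\D_n}\log_2Z_Q\to\beta'(p)$ almost surely on non-extinction, so retaining a single near-extremal cube already forces $S_n\ge 2^{-2nd+2n(\beta'(p)-\varepsilon)}$ for all large $n$, and hence $\liminf_n\tfrac1n\log S_n\ge 2\beta'(p)-2d=-\alpha(W)$. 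Combined with the upper bound, this settles the claim for $\mu_n$.

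The remaining and principal difficulty is transferring the lower bound to $\widetilde S_n$: the i.i.d.\ factors $\Omega_Q$ may have infinite second moment and can be arbitrarily small on any single cube, so a pointwise comparison with $S_n$ is impossible, and one must exploit the independence of the $\Omega_Q$ across many cubes --- the spatial-independence (SI-martingale) mechanism of \cite{ShmerkinSuomala2018}. In the non-degenerate regime, $\E(\Omega^2)<\infty$ and, by multifractality (cf.\ the derivation of \eqref{eq:alpha-W}, which yields $\max_{Q\in\D_n}\mu_n(Q)^2\le 2^{-cn}S_n$ for some $c>0$), a Chebyshev estimate conditional on $\F_n$ --- after truncating $\Omega$ at a large fixed level $T$ --- gives $\widetilde S_n\ge\tfrac12\,\E((\Omega\wedge T)^2)\,S_n$ for all large $n$, almost surely on non-extinction. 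In the degenerate regime I would instead restrict the sum to the $\F_n$-measurable family $A_n:=\{Q\in\D_n:Z_Q\ge 2^{n(\beta'(p)-\varepsilon)}\}$, so that $\widetilde S_n\ge 2^{-2nd+2n(\beta'(p)-\varepsilon)}\sum_{Q\in A_n}\Omega_Q^2$; by the quenched large-deviation estimate for branching random walks, $\#A_n$ grows exponentially almost surely on non-extinction, so a Borel--Cantelli argument over the i.i.d.\ $\Omega_Q$, $Q\in A_n$, yields $\sum_{Q\in A_n}\Omega_Q^2\ge\delta^2$ for all large $n$, almost surely on non-extinction. Sending $\varepsilon\downarrow0$ in both regimes finishes the proof. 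I expect the genuinely delicate points to be the upgrade of ``with positive probability'' estimates to ``almost surely on non-extinction'' via spatial independence, and the quenched branching-random-walk inputs (the maximal speed $\beta'(p)$ and the exponential abundance of near-extremal cubes), which I would import from the multifractal/branching-random-walk literature (\cite{Molchan1996,Barral2000,Jaffuel2012,Madaule2017}); everything else is routine.
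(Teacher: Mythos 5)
Your proposal is correct in outline but takes a genuinely different route from the paper in the supercritical case and, more importantly, in passing from $\mu_n(Q)$ to $\mu(Q)$. The paper's proof works entirely through the $q$-th moment sums for subcritical exponents: it observes that $Y_n(q)=2^{nd(q-1)}\E(W^q)^{-n}\sum_{Q\in\D_n}\mu_n(Q)^q$ is a non-degenerate cascade martingale whenever $\E(W_q\log W_q)<d$, imports the corresponding statement $\lim_n\frac{\log\sum_{Q}\mu(Q)^q}{-n}=\tau(q)$ for the limit measure from Barral--Gon\c{c}alves, and then, in the regime $\E(W_2\log W_2)\ge d$, interpolates from $q<p$ to $q=2$ in both directions: Jensen (your $\ell^q\hookrightarrow\ell^2$ step, which is the same as the paper's) for one inequality, and convexity of $q\mapsto\log\sum_{Q}\mu_n(Q)^q$ together with $\tau'(p)=\tau(p)/p$ for the other. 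Your subcritical case ($S_n=2^{-n\alpha}Y_n(2)$ with $Y_n(2)$ the $W_2$-cascade martingale) coincides with the paper's. In the supercritical case you instead go through the branching random walk: Biggins' speed theorem for a single extremal cube (lower bound for $S_n$) and a quenched count of near-extremal cubes plus spatial independence of the $\Omega_Q$ (lower bound for $\widetilde S_n$). Both work; the paper's interpolation is shorter and transfers to $\mu$ for free once the $q$-moment result for $\mu$ is cited, while your route makes the mechanism explicit (the $\ell^2$-sum is carried by cubes at the boundary exponent $\beta'(p)$) but pays for it in the transfer step.

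The one place where your outline carries real technical debt is the quenched input in the degenerate regime: to run Borel--Cantelli over the i.i.d.\ $\Omega_Q$, $Q\in A_n$, you need not merely that $\#A_n\to\infty$ almost surely on non-extinction but a quantitative statement (e.g.\ $\PP(\#A_n<n^2)$ summable), and the almost sure exponential growth of $\#\{Q\in\D_n:Z_Q\ge2^{n(\beta'(p)-\varepsilon)}\}$ on survival is itself a nontrivial theorem (uniform convergence of the additive martingales, or the almost sure large-deviation spectrum of the branching random walk) rather than a consequence of the annealed many-to-one count. Similarly, your Chebyshev step in the non-degenerate regime rests on the almost sure bound $\max_{Q\in\D_n}\mu_n(Q)^2\le2^{-cn}S_n$; this is true when $\E(W_2\log W_2)<d$ (affineness of $\beta$ between $2$ and the critical exponent would force $\E(W_2\log W_2)=d$), but it again requires an almost sure maximal-displacement estimate. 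These gaps are fillable with precise citations from the branching random walk literature, but they are exactly the difficulties the paper sidesteps by quoting the $q$-moment convergence for $\mu$ directly and interpolating.
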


\begin{proof}
The result may be derived straightforwardly from multifractal analysis on the cascades. For convenience, we provide an outline of the argument. Let $\alpha=\alpha(W)$.  
Let $q>1$ such that $\E(W_q\log W_q)<d$. 
Observe that 
\[Y_n(q):=2^{nd(q-1)}\E(W^q)^{-n}\sum_{Q\in\D_n}\mu_n(Q)^q\]
is a martingale.
Applying \eqref{eq:crit} to the cascade driven by $W_q$ implies that $Y_n(q)$
converges to a non-zero limit almost surely on non-extinction and this, in particular, implies that 
\begin{equation}\label{eq:dim_qn}
\lim_{n\to\infty}\frac{\log\sum_{Q\in\D_n}\mu_n(Q)^q}{-n}=\tau(q)   
\end{equation}
almost surely on non-extinction, where $\tau(q)=d(q-1)-\log\E(W^q)$. Moreover, also
\begin{equation}\label{eq:dim_q}
\lim_{n\to\infty}\frac{\log\sum_{Q\in\D_n}\mu(Q)^q}{-n}=\tau(q)    
\end{equation}
almost surely on non-extinction. In a special case, \eqref{eq:dim_q} was proved by Collet and Koukiou \cite{ColletKoukiou1992}. For an argument that applies in the present setting, and more generally, see e.g. Barral and Gon\c calves \cite[Theorem 3.1 (1)]{BarralConcalves2011}. If $\E(W_2\log W_2)< d$, the claim now follows by applying \eqref{eq:dim_qn}--\eqref{eq:dim_q} with $q=2$.

If $\E(W_2\log W_2)\ge d$, there exists a unique $1<p\le 2$ such that $\E(W_p\log W_p)=d$. For any $1<q<p\le 2$, by Jensen's inequality, we have 
\begin{align*}
\left(\sum_{Q\in\D_n}\mu_n(Q)^2 \right)^{q/2}\le \sum_{Q\in\D_n}\mu_n(Q)^q,
\end{align*}
and likewise for $\mu(Q)$ in place of $\mu_n(Q)$. Letting $q\nearrow p$ along a subsequence and combining with \eqref{eq:dim_qn}--\eqref{eq:dim_q}, this implies 
\begin{equation}\label{eq:lb}
\liminf_{n\to\infty}\frac{\log\sum_{Q\in\D_n}\mu_n(Q)^2}{-n}\ge\frac{2\tau(p)}{p}\,,\quad\liminf_{n\to\infty}\frac{\log\sum_{Q\in\D_n}\mu(Q)^2}{-n}\ge\frac{2\tau(p)}{p}\,.    
\end{equation}
On the other hand, if $1<q<q'<p$, the convexity of $q\mapsto\log\sum_{Q\in\D_n}\mu_n(Q)^q$ implies
\begin{align*}
&\frac{\log\sum_{Q\in\D_n}\mu_n(Q)^2}{-\log n}\\
&\le\frac{\log\sum_{Q\in\D_n}\mu_n(Q)^q}{-\log n}+\frac{\log\sum_{Q\in\D_n}\mu_n(Q)^{q'}-\log\sum_{Q\in\D_n}\mu_n(Q)^{q}}{-\log n}\times\frac{2-q}{q'-q}\,,
\end{align*}
and likewise for $\mu(Q)$ in place of $\mu_n(Q)$.
Using \eqref{eq:dim_qn}--\eqref{eq:dim_q} with the values $q,q'$ and letting first $q\nearrow q'$ and then $q'\nearrow p$, and noting that $\tau'(p)=\tau(p)/p$, implies
\begin{equation}\label{eq:ub}
\limsup_{n\to\infty}\frac{\log\sum_{Q\in\D_n}\mu_n(Q)^2}{-n}\le\frac{2\tau(p)}{p}\,,\quad\limsup_{n\to\infty}\frac{\log\sum_{Q\in\D_n}\mu(Q)^2}{-n}\le\frac{2\tau(p)}{p}\,,    
\end{equation}
see \cite[p. 693]{Molchan1996}. The claim follows by combining \eqref{eq:lb} and \eqref{eq:ub}.
\end{proof}

\begin{remark}
In an earlier version of the article, we used more recent results of A\"id\'ekon \cite{Aidekon2013} and Madaule \cite{Madaule2017} on additive martingales in the branching random walk to derive a weaker version of Lemma \ref{lem:dim_2}. We thank the referee for providing a simpler argument and for pointing us to the relevant literature.    
\end{remark}

\section{Proof of Theorem \ref{thm:main}} \label{sec:proofmain}

Recall that by \eqref{eq:fourierL2}, we have $\dim_F \mu \le \dim_2 \mu$. Thus,  Theorem \ref{thm:main} is a corollary of Lemma \ref{lem:dim_2} and the following two results.

\begin{theorem}\label{thm:main'}
$\dim_F\mu\ge\min\{2,\alpha(W)\}$
almost surely on non-extinction, where $\alpha(W)$ is given by \eqref{eq:alpha-W}.
\end{theorem}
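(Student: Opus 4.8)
The plan is to establish the lower bound $\dim_F\mu\ge\min\{2,\alpha(W)\}$ via the SI-martingale (spatially independent martingale) technique from \cite{ShmerkinSuomala2018}. The key observation is that the normalised densities $\mu_n/\mu_n([0,1]^d)$ — or, more conveniently, the sequence $\mu_n$ itself together with the ratio martingales attached to each dyadic subtree — form a spatially independent martingale in the sense of \cite{ShmerkinSuomala2018}: the increment $\mu_{n+1}-\mu_n$ restricted to a cube $Q\in\D_n$ depends only on the random variables $W_J$ for $J\subseteq Q$, hence the increments over disjoint cubes of the same generation are independent. The strategy is then to bound the Fourier transform $\widehat{\mu}(\xi)$ by freezing the cascade at a level $n=n(\xi)$ chosen appropriately relative to $|\xi|$, splitting $\widehat\mu(\xi)=\widehat{\mu_n}(\xi)+(\widehat\mu(\xi)-\widehat{\mu_n}(\xi))$, and controlling each piece separately.

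First I would treat the ``frozen'' term $\widehat{\mu_n}(\xi)$. Since $\mu_n$ is constant on each $Q\in\D_n$ with value $\mu_n(Q)2^{nd}$, one has $\widehat{\mu_n}(\xi)=\sum_{Q\in\D_n}\mu_n(Q)\,e^{-2\pi i x_Q\cdot\xi}\,\widehat{\mathbf 1}_{[0,1]^d}(2^{-n}\xi)$ up to translation factors; for $|\xi|\approx 2^n$ the Fourier transform of the unit cube contributes a bounded factor, so the task reduces to bounding the random trigonometric sum $\sum_{Q\in\D_n}\mu_n(Q)e^{-2\pi i x_Q\cdot\xi}$. Here I would condition on $\F_{n-m}$ for a suitable $m=m(n)$ (a ``many scales at once'' type decomposition) so that the sum splits into independent blocks indexed by the cubes of generation $n-m$; within each block the $2^{md}$ terms are independent mean-zero after centering, and Bernstein's inequality (Lemma \ref{lem:HoeffdingSG}) — this is exactly where sub-exponentiality of $W$ enters — gives a tail bound of the form $\exp(-c\min\{\kappa^2/\sigma^2,\kappa/M\})$. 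The variance parameter $\sigma^2$ is governed by $\sum_Q\mu_{n-m}(Q)^2\E(W^2)^m$, i.e. by $2^{-n\alpha(W)}$ up to the block structure, courtesy of Lemma \ref{lem:dim_2}; the maximal term $M$ is controlled because $W$ is sub-exponential so $\max_Q\mu_n(Q)$ is at most polynomially large in $2^n$ after removing an event of small probability. Choosing $\kappa\approx 2^{-ns/2}$ for any $s<\min\{2,\alpha(W)\}$ and summing the resulting probabilities over all frequencies $\xi$ with $|\xi|\approx 2^n$ and all $n$, a Borel--Cantelli argument yields $|\widehat{\mu_n}(\xi)|=O(|\xi|^{-s/2})$ almost surely; the truncation at $2$ arises because there are $\sim 2^{nd}$ frequencies at scale $2^n$ but the union bound over a fixed dyadic annulus in the relevant plane or over the circle in the angular/radial decomposition costs only a polynomial factor — more precisely the exponent $2$ is the barrier past which the Gaussian-type deviation $\kappa^2/\sigma^2$ is no longer strong enough to beat the number of frequencies regardless of how small $\alpha(W)$ forces $\sigma^2$ to be, so $\min\{2,\alpha(W)\}$ is the natural outcome.

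Next I would handle the tail $\widehat\mu(\xi)-\widehat{\mu_n}(\xi)=\sum_{Q\in\D_n}(\widehat{\mu|_Q}(\xi)-\widehat{\mu_n|_Q}(\xi))$. Each summand has a conditional mean of zero given $\F_n$ (since $\E(\mu|_Q\mid\F_n)=\mu_n|_Q$ as measures), and the summands are conditionally independent over $Q\in\D_n$; moreover $|\widehat{\mu|_Q}(\xi)|\le\mu(Q)$ and similarly for $\mu_n$, so again Bernstein applies with variance parameter $\lesssim\sum_Q\mu(Q)^2+\sum_Q\mu_n(Q)^2\approx 2^{-n\alpha(W)}$. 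Running the same Borel--Cantelli scheme — now over a doubly-indexed family, the freezing level $n$ and the frequencies $\xi$ — gives $|\widehat\mu(\xi)-\widehat{\mu_n}(\xi)|=O(|\xi|^{-s/2})$ for the optimal choice $n=n(|\xi|)$. Combining the two estimates proves the theorem.

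The main obstacle I anticipate is the uniformity over all frequencies: pointwise-in-$\xi$ bounds from Bernstein are routine, but to get an almost-sure statement of the form $|\widehat\mu(\xi)|=O(|\xi|^{-s/2})$ \emph{for all large $\xi$ simultaneously} one must (i) discretise the frequency space at each scale $2^n$ and bound the number of representative frequencies — roughly $2^{nd}$ of them — then (ii) verify that the Bernstein tail $\exp(-c\,2^{-ns}/2^{-n\alpha(W)})=\exp(-c\,2^{n(\alpha(W)-s)})$ (in the Gaussian regime) beats $2^{nd}$ with room to spare so the probabilities are summable in $n$, and (iii) control the oscillation of $\widehat\mu$ between nearby discretised frequencies, which needs an a priori bound on $|\nabla\widehat\mu|$, i.e. on $\int|x|\,d\mu(x)$ — harmless on $[0,1]^d$. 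Getting the bookkeeping of the scale parameter $m=m(n)$ right so that the block variances genuinely reflect $\dim_2\mu$ (rather than a lossy bound), while keeping the maximal-term control $M$ small enough, is the delicate quantitative heart of the argument; this is precisely the place where the SI-martingale machinery of \cite{ShmerkinSuomala2018} is designed to do the heavy lifting, and where the sub-exponential hypothesis on $W$ is indispensable.
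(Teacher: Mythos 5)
Your overall strategy (Bernstein's inequality for sub-exponential variables, discretisation of frequency space, Borel--Cantelli, then deterministic interpolation between grid points) is the right one and matches the paper in spirit, but the specific decomposition $\widehat\mu(\xi)=\widehat{\mu_n}(\xi)+(\widehat\mu(\xi)-\widehat{\mu_n}(\xi))$ has two genuine gaps. First, in the frozen term you claim that, conditional on $\F_{n-m}$, the $2^{md}$ summands $\mu_n(Q)e^{-2\pi i x_Q\cdot\xi}$ inside a block $P\in\D_{n-m}$ are ``independent mean-zero after centering''. They are not: two cubes $Q,Q'\subset P$ of generation $n$ share the weights $W_J$ of all their common ancestors between generations $n-m+1$ and $n-1$, so the $\mu_n(Q)$ within a block are strongly correlated. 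Conditional independence across cubes holds only for one-step increments, which is why the paper telescopes level by level, writing $\widehat{\mu_{n+1}}(\xi)-\widehat{\mu_n}(\xi)=\sum_{Q\in\D_n}X_{Q,\xi}$ with the $X_{Q,\xi}$ independent given $\F_n$ and $\|\Re (X_{Q,\xi})\|_{\Psi_1}\le C\mu_n(Q)$, since only the children's weights $W_{Q_k}-1$ are random there. Second, your one-shot Bernstein bound on the tail $\widehat\mu(\xi)-\widehat{\mu_n}(\xi)$ requires the summands $\widehat{\mu|_Q}(\xi)-\widehat{\mu_n|_Q}(\xi)$ to be sub-exponential conditional on $\F_n$; but $\mu(Q)=\mu_n(Q)Y_Q$ where $Y_Q$ is a copy of the normalised total cascade mass, and by Kahane--Peyri\`ere that mass generically has only finitely many moments (power-law tails), so these summands are not sub-exponential even when $W$ is. The level-by-level telescoping avoids this entirely.

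You also misattribute the truncation at $2$. It does not come from the union bound over frequencies: the Bernstein tail is $\exp(-c2^{n\varepsilon})$, which beats any polynomial count $2^{O(n)}$ of grid points regardless of $\alpha(W)$. It comes from the low-level increments in the telescoping sum: for $|\xi|_\infty>2^{n+1}$ one only has $|\widehat{\mu_{n+1}}(\xi)-\widehat{\mu_n}(\xi)|\le (2^{n+1}/|\xi|_\infty)\,|\widehat{\mu_{n+1}}(\xi')-\widehat{\mu_n}(\xi')|$ with $|\xi'|_\infty<2^{n+1}$, so the levels $n\le n_\xi$ contribute $|\xi|^{-1}\sum_{n\le n_\xi}O(2^{n(1-\tau/2)})$, which is $O(|\xi|^{-\tau/2})$ only when $\tau<2$; the level-$0$ increment alone already contributes of order $|\xi|^{-1}$, capping the achievable decay at Fourier dimension $2$. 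Your point about controlling the oscillation between discretised frequencies via a uniform Lipschitz bound is correct and is exactly how the paper passes from the grid to all $\xi$.
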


\begin{theorem}\label{thm:dimF2}
    $\dim_F\mu\le 2$ almost surely.
\end{theorem}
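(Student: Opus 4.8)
The plan is to exploit that $\dim_F$ is unaffected by restricting $\widehat\mu$ to a coordinate axis. Restricting to a line through the origin turns the problem into a statement about the one‑dimensional marginal of $\mu$, and the point is that the Fourier transform of a finite measure on $\R$ cannot decay faster than order $|t|^{-1}$ unless the measure is absolutely continuous with a continuous density; we will see that the marginal of $\mu$ is never of that form.

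Since $\dim_F\mu<d$ by definition, the claim is vacuous for $d\le 2$, so assume $d\ge 3$. By \eqref{eq:fourierL2} and Lemma~\ref{lem:dim_2}, $\dim_F\mu\le\dim_2\mu=\alpha(W)$. Using the concavity of $\tau(q)=d(q-1)-\log\E(W^q)$ together with $\tau(1)=0$, one checks directly from \eqref{eq:alpha-W} that $\alpha(W)\le\tau'(1)=d-\E(W\log W)$ in both cases of \eqref{eq:alpha-W}. Hence, if $\E(W\log W)\ge d-1$, then $\dim_F\mu\le\alpha(W)\le 1<2$ and we are done. From now on assume $\E(W\log W)<d-1$; note this is exactly the Kahane--Peyri\`ere non‑degeneracy condition for Mandelbrot cascades in dimension $d-1$.

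Let $\pi\colon[0,1]^d\to[0,1]$ be projection onto the first coordinate and $\nu=\pi_*\mu$. Since $\widehat\mu(t,0,\dots,0)=\widehat\nu(t)$, if $|\widehat\mu(\xi)|=O(|\xi|^{-s/2})$ for some $s>2$ then $\widehat\nu\in L^1(\R)$, so by Fourier inversion $\nu$ would be absolutely continuous with a continuous density. Thus it suffices to show that, almost surely on non‑extinction, $\nu$ is not of that form. Observe that $\pi_*\mu_n$ has density $h_n(x_1)=\int_{[0,1]^{d-1}}\mu_n(x_1,x')\,dx'$, and expanding the product $\mu_n(x_1,x')=\prod_{i=1}^n W_{Q_i}$ and integrating in $x'$ identifies $h_n(x_1)$ with the level‑$n$ total mass of a $(d-1)$‑dimensional Mandelbrot cascade $\mu^{(x_1)}$ with the same generator $W$, built from those $W_J$ whose first‑coordinate digit string is a prefix of the dyadic expansion of $x_1$. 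Since $\E(W\log W)<d-1$, for each fixed $x_1$ the martingale $(h_n(x_1))_n$ is uniformly integrable and converges in $L^1(\PP)$ to $h_\infty(x_1):=\mu^{(x_1)}([0,1]^{d-1})$ with $\E h_\infty(x_1)=1$. As $\E|h_n(x_1)-h_\infty(x_1)|\le 2$, Fubini and dominated convergence give $h_n\to h_\infty$ in $L^1$ for the product of $\PP$ and Lebesgue measure, so along a subsequence $h_n\to h_\infty$ in $L^1([0,1])$ almost surely; together with $\pi_*\mu_n\to\nu$ weakly this yields $d\nu=h_\infty\,dx_1$ almost surely.

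It remains to show that $h_\infty$ is almost surely not (a.e.\ equal to) a continuous function. If it were, with continuous version $\widetilde h$, then the restrictions of $h_\infty$ to $[0,\tfrac12)$ and to $[\tfrac12,1)$ are built from disjoint families of the variables $W_J$, hence are independent; therefore $\widetilde h(\tfrac12)=\lim_{x\uparrow 1/2}\widetilde h(x)$ and $\widetilde h(\tfrac12)=\lim_{x\downarrow 1/2}\widetilde h(x)$ are independent random variables that coincide almost surely, so $\widetilde h(\tfrac12)$ is almost surely a deterministic constant. The same argument at every dyadic rational in $(0,1)$, combined with continuity and density, forces $\widetilde h$ to be almost surely equal to a fixed deterministic function, hence $\nu$ to be a deterministic measure. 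This contradicts the fact that $\nu([0,\tfrac12))=\mu([0,\tfrac12)\times[0,1]^{d-1})$ has positive variance (because $W\not\equiv 1$). Hence $\widehat\nu\notin L^1(\R)$ almost surely on non‑extinction, and by the reduction above $\dim_F\mu\le 2$. The main obstacle is precisely this regime $\E(W\log W)<d-1$, where $\nu$ is genuinely absolutely continuous and one must extract the roughness of its density from the product structure of the cascade; note also that the argument uses only the Kahane--Peyri\`ere condition and a mild moment bound, not sub‑exponentiality of $W$.
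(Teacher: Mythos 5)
Your reduction is sound and parallels the paper's: both proofs reduce the statement to showing that the density of the coordinate projection $\nu=\mu\circ\pi^{-1}$ cannot be (a.e.\ equal to) a continuous function, and your identification of that density as the total mass $h_\infty(x_1)$ of a fibre cascade in dimension $d-1$, together with the dispatch of the regime $\E(W\log W)\ge d-1$ via $\dim_F\mu\le\dim_2\mu\le\dim_H\mu=d-\E(W\log W)\le 1$, is correct. The problem is in the last step.

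Let $A$ be the event that $h_\infty$ admits a continuous version $\widetilde h$. Your argument at $q=\tfrac12$ produces two random variables $L=\lim_{x\uparrow 1/2}\widetilde h(x)$ and $R=\lim_{x\downarrow 1/2}\widetilde h(x)$, measurable with respect to independent $\sigma$-algebras, and asserts that since they ``coincide almost surely'' they must be a.s.\ constant. But $L=R$ only \emph{on the event $A$}, which is exactly the event whose probability you are trying to show is zero; independence of $L$ and $R$ does not survive restriction to $A$, and two independent variables can perfectly well agree on a sub-event of positive probability without being constant there (take $L,R$ i.i.d.\ uniform on $\{0,1\}$ and $A=\{L=R\}$). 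What the single-point argument actually yields is $\PP(A)\le\PP(L=R)\le\sup_r\PP(L=r)=:c$, and one must separately rule out $c=1$ (i.e.\ that the one-sided limit is an a.s.\ constant --- this is the content of part (2) of the paper's Lemma \ref{lem:X}, proved via the distributional self-similarity of the limit, and your ``positive variance of $\nu([0,\tfrac12))$'' observation is only a contradiction under the unjustified assumption $\PP(A)=1$). Even granting $c<1$, you have only shown $\PP(A)<1$, not $\PP(A)=0$. The paper closes precisely this gap by bounding the \emph{conditional} probability of continuity at each dyadic $q$ of level $m$ by $c<1$ given $\mathcal F_m$, observing that these events are conditionally independent across the $\sim 2^m$ points $q$ at a fixed level, and applying Borel--Cantelli. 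Your proof needs an analogous ``positive probability $\Rightarrow$ almost surely'' upgrade; as written, the conclusion $\PP(A\cap\{\text{non-extinction}\})=0$ does not follow.
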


We prove Theorems \ref{thm:main'} and \ref{thm:dimF2} in separate subsections. We begin by proving the lower bound for the Fourier dimension, which we consider to be our main result.

\subsection{Proof of Theorem \ref{thm:main'}}

Towards the proof of $\dim_F\mu\ge\min\{2,\alpha(W)\}$, we
estimate
$$
\widehat{\mu_n}(\xi)=\int_{[0,1]^{d}} e^{-2\pi i x\cdot\xi}\mu_n(x)\,dx=\sum_{Q\in \D_n}\int_Q e^{-2\pi i x\cdot\xi}\mu_n(x)\,dx\,.
$$
For any $n\in\N$ and $\xi\in\R^d$,
$$
\widehat{\mu_{n+1}}(\xi)-\widehat{\mu_{n}}(\xi)=\sum_{Q\in D_n} X_{Q,\xi}\,,
$$
where
$$
X_{Q,\xi}=\int_Q (\mu_{n+1}(x)-\mu_n(x))e^{-2\pi i x\cdot\xi}dx.
$$
Conditional on $\F_n$, the random variables $X_{Q,\xi}, Q\in \D_n$, are independent and have zero mean.
Moreover, their real and imaginary parts are sub-exponential and satisfy
\begin{equation}\label{eq:sGnormbound}
||\Re(X_{Q,\xi})||_{\Psi_1}, ||\Im(X_{Q,\xi})||_{\Psi_1}\le C \mu_n(Q)\,.
\end{equation}
Indeed, noting that $W-1$ is also sub-exponential and letting $Q_1,\ldots,Q_{2^d}\in\D_{n+1}$ be the dyadic child cubes of $Q$,
\begin{align*}
\left|\Re(X_{Q,\xi})\right|&=\left|\int_{Q} (\mu_{n+1}(x)-\mu_n(x) )\cos (2\pi x\cdot\xi)\,dx\right|\\
&\le\sum_{k=1}^{2^d}\int_{Q_k}|W_{Q_k}-1|\,\mu_n(x)\,dx\\
&\le\mu_n(Q)\sum_{k=1}^{2^d}|W_{Q_k}-1|
\end{align*}
and likewise for $\Im(X_{Q,\xi})$. The estimates in \eqref{eq:sGnormbound} now follows using the triangle inequality for the sub-exponential norm.

Let $\varepsilon>0$. For each $n\in \N$, denote $S_n=\sum_{Q\in\D_n}\mu_n(Q)^2$. Noting \eqref{eq:sGnormbound}, we have
$\sum_{Q\in D_n} ||X_{Q,\xi}||^2_{\Psi_1}=O(S_n)$ and also $\max_{Q\in\D_n}||X_{Q,\xi}||_{\Psi_1}=O(1)S^{1/2}_n$. Thus
\[
\min\left\{\frac{2^{2\varepsilon n} S_n}{\sum_{Q\in\D_n}||X_{Q,\xi}||^2_{\Psi_1}}\,,\, \frac{2^{\varepsilon n}S_n^{1/2}}{\max_{Q\in \D_n}||X_{Q,\xi}||_{\Psi_1}}\right\}\ge c 2^{\varepsilon n}\,.
\]
Lemma \ref{lem:HoeffdingSG} then implies that for any fixed $\xi\in\R^d$,
\begin{equation*}
\PP\left(|\widehat{\mu_{n+1}}(\xi)-\widehat{\mu_{n}}(\xi)|>2^{\varepsilon n}S_{n}^{1/2}\,|\, \mathcal{F}_n \right)\le C \exp(-c2^{n\varepsilon})\,,
\end{equation*}
and hence also unconditionally
\begin{equation*}
\PP\left(|\widehat{\mu_{n+1}}(\xi)-\widehat{\mu_{n}}(\xi)|>2^{\varepsilon n}S_{n}^{1/2} \right)\le C \exp(-c2^{n\varepsilon})\,.
\end{equation*}

For $\xi\in \R^d$, we use the notation $|\xi|_\infty=\max\{|\xi_1|, \ldots, |\xi_d|\}$ for the $\ell^\infty$-norm. Let $\tau<\min\{2,\alpha(W)\}$ and let $B_n$ denote the event
$$
|\widehat{\mu_{n+1}}(\xi)-\widehat{\mu_n}(\xi)|>2^{\varepsilon n}S_{n}^{1/2}\text{ for some }\xi\in2^{-n\tau/2}\Z^d\,,\, |\xi|_\infty\le 2^{n+1}\,.
$$
Then 
\begin{equation}\label{eq:B_n}
    \PP(B_n)\le C2^{nd(1+\tau/2)}\exp(-c2^{n\varepsilon})\,,
\end{equation} 
and thus by the Borel-Cantelli lemma,
\begin{equation}\label{eq:B}
\PP(B_n\text{ infinitely often})=0\,.
\end{equation}

By Lemma \ref{lem:dim_2}, 
for each $\beta<\alpha(W)$, $S_n=O(2^{-n\beta})$ almost surely. Adjusting $\beta$ and $\varepsilon$ and combining with \eqref{eq:B}, we infer that there is a random constant $K<\infty$ such that
\begin{equation*}
|\widehat{\mu_{n+1}}(\xi)-\widehat{\mu_n}(\xi)|\le K 2^{-n\tau/2}
\end{equation*}
holds for all $n\in\N$ and $\xi\in 2^{-n\tau/2}\Z^d$, $|\xi|_\infty\le 2^{n+1}$.
Also, since $\mu_n([0,1]^d)$ is a martingale with $\E(\mu_1([0,1]^d))=1$, it follows that $\sup_n \mu_n([0,1]^d)<\infty$ almost surely. Whence, the maps
\[
\xi\mapsto\widehat{\mu_{n+1}}(\xi)-\widehat{\mu_{n}}(\xi)
\]
are Lipschitz with a Lipschitz constant independent of $n$, see \cite[(3.19)]{Mattila2015}. Taking this into account, we observe that
\begin{equation}\label{eq:prob-part}
|\widehat{\mu_{n+1}}(\xi)-\widehat{\mu_n}(\xi)|\le M 2^{-n\tau/2}\,.
\end{equation}
for all $|\xi|_\infty\le 2^{n+1}$, where $M<\infty$ is random but independent of $n$ and $\xi$.

Assuming \eqref{eq:prob-part}, we may now complete the proof as follows: If $|\xi|_\infty >2^{n+1}$, we may write $\xi=2^{n+1}q+\xi'$ where $q\in\Z^d$ and $|\xi'|_\infty<2^{n+1}$. By an elementary computation (see \cite[Proof of Theorem 14.1]{ShmerkinSuomala2018}),
$$
|\widehat{\mu_{n+1}}(\xi)-\widehat{\mu_{n}}(\xi)|\le  \frac{2^{n+1}}{|\xi|_\infty}  \left|\widehat{\mu_{n+1}}(\xi')-\widehat{\mu_{n}}(\xi')\right|\,.
$$
It follows that
\begin{equation*}
|\widehat{\mu_{n+1}}(\xi)-\widehat{\mu_{n}}(\xi)|\le M\min\left\{1, \frac{2^{n+1}}{|\xi|_\infty} \right\}2^{-n\tau/2},\quad \forall n\in\N, \xi\in\R^d\,.
\end{equation*}

Finally, for any $\xi\in \R^d$, picking $n_\xi$ so that $2^{n_\xi}<|\xi|_\infty\le 2^{n_\xi+1}$, we have
\begin{align*}
|\widehat{\mu_{m}}(\xi)-\widehat{\mu_0}(\xi)|&\le\sum_{n=0}^{n_\xi}|\widehat{\mu_{n+1}}(\xi)-\widehat{\mu_n}(\xi)|+\sum_{n=n_\xi+1}^{m-1}|\widehat{\mu_{n+1}}(\xi)-\widehat{\mu_n}(\xi)|\\
&=|\xi|^{-1}\sum_{n=0}^{n_\xi}O\left(2^{n(1-\tau/2)}\right)+\sum_{n=n_\xi+1}^{m-1}O\left(2^{-n\tau/2}\right)\\
&=O(|\xi|^{-\tau/2})\,,
\end{align*}
for all $m\ge n_\xi$, using the fact $\tau<2$ to bound the first summand.
Noting that $\widehat{\mu_0}(\xi)=O(|\xi|^{-1})$, we have shown that for all $\xi\in \R^d$,
\[
\widehat{\mu}(\xi)=\lim_{m\rightarrow \infty} \widehat{\mu_m}(\xi)=O(|\xi|^{-\tau/2})+O(|\xi|^{-1})=O(|\xi|^{-\tau/2})\,.
\]
Since  $\tau<\min\{2,\alpha(W)\}$ is arbitrary, we conclude that  $\dim_F\mu\ge \min\{\alpha(W), 2\}$ almost surely on non-extinction.

\subsection{Proof of Theorem \ref{thm:dimF2}}
We base our proof on the fact that for any measure $\nu$ with $\dim_F \nu>2$, all of its orthogonal projections onto lines are absolutely continuous with a continuous density, see \cite[Proposition 3.2.12]{Grafakos08}. The projections of $\mu$ onto the principal directions have been studied in depth in \cite{FengBarral2018}, where, in particular, it is shown that these projections are absolutely continuous if $\E(W\log W)<d-1$ while for $\E(W\log W)\ge d-1$, these projections are singular with respect to the Lebesgue measure. In the following we show that, a.s. on non-extinction, the principal projections cannot have continuous densities.

To that end, let $d\ge 2$ and let $\pi$ denote the orthogonal projection onto the first coordinate axis which we identify with $\R$. We consider the following auxiliary random variables:
\begin{align*}
X_n&=2^n \mu\left(\pi^{-1}([0,2^{-n}])\right)\,.
\end{align*}
We will make use of the following lemma.
\begin{lemma}\label{lem:X}
    \begin{enumerate}
        \item The finite limit $X=\lim_n X_n$ exists almost surely.\label{eq:lem1}
        \item $\PP(X=x)<1$ for all $x>0$.\label{eq:lem2}
        \item If $X=0$ almost surely, then $\dim_F\mu\le 2$ almost surely.\label{eq:lem3}
    \end{enumerate}
\end{lemma}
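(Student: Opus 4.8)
The three parts of Lemma~\ref{lem:X} are of rather different natures, so I would treat them in turn, reusing the martingale ideas already in play.

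\emph{Part (1).} The key observation is that $X_n$ is a nonnegative martingale with respect to a suitable filtration. Indeed, writing $I_n=\pi^{-1}([0,2^{-n}])$, the slab $I_n$ is a union of $2^{(d-1)n}$ dyadic cubes of generation $n$ stacked in the non-principal directions, all sharing the same first-coordinate address $(0,\ldots,0)$. The cascade restricted to $I_n$ is again, after rescaling, a cascade of the same type but with the branching number in the first coordinate frozen to a single child; equivalently, $2^n\mu(I_n)$ is built from the weights $W_Q$ along this distinguished column. A direct computation with $\E(W)=1$ shows $\E(X_{n+1}\mid \G_n)=X_n$, where $\G_n$ is the sigma-algebra generated by the weights relevant to $I_n$ up to generation $n$. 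Being a nonnegative martingale, $X_n$ converges almost surely to a finite limit $X$ by the martingale convergence theorem. (One should double-check that the natural candidate filtration makes $X_n$ adapted; this is the only mildly fiddly point.)

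\emph{Part (2).} Here I would exploit the statistical self-similarity of the cascade. The slab $I_n$ decomposes, at the first level, into $2^{d-1}$ "corner" sub-slabs coming from the $2^{d-1}$ first-generation cubes touching the relevant face, and the restriction of $\mu$ to each is a rescaled independent copy of a cascade measure, with $X$ satisfying a stochastic fixed-point equation of the form $X \stackrel{d}{=} \tfrac12\sum_{k} W_k X^{(k)}$ with the $X^{(k)}$ i.i.d.\ copies of $X$, independent of the $W_k$; more simply, already passing from generation $0$ to generation $1$ of the distinguished column writes $X$ as $W_{Q_1}$ times an independent copy of $X$ (plus possibly a contribution from the sibling cubes in the frozen direction). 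If $\PP(X=x)=1$ for some $x>0$ we would get $x = W\cdot x$ a.s.\ up to the sibling terms, forcing $W\equiv 1$, which is excluded; so $\PP(X=x)<1$. The precise recursion to write down depends on exactly how $I_{n+1}\subset I_n$ sits, so I would first fix that geometry and then read off the fixed-point identity. The case $x=0$ is allowed and is precisely the dichotomy that feeds into part (3).

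\emph{Part (3).} This is where I expect the main obstacle, and where the stated strategy (via \cite[Proposition 3.2.12]{Grafakos08}) enters. Suppose $\dim_F\mu>2$; then every projection $\pi_*\mu$ has a continuous density $g$, so in particular $\pi_*\mu([0,2^{-n}]) = \int_0^{2^{-n}} g \to 0$ like $2^{-n}g(0)$, i.e.\ $X_n \to g(0)$, a deterministic-looking statement — but $g(0)$ is itself a random variable, namely $g(0)=X$. The point is to derive a contradiction with part (2): conditioning on non-extinction, continuity of $g$ forces $X$ to equal the value of the density at $0$, and one wants to leverage the self-similar recursion to show this pins $X$ down too rigidly. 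Concretely: if $X=0$ a.s., then the continuous density of every principal projection vanishes at every dyadic rational (by applying the same argument to each dyadic sub-slab and using self-similarity), hence vanishes identically by continuity, contradicting $\mu\not\equiv 0$ on non-extinction. Thus $\dim_F\mu>2$ is impossible when $X=0$ a.s., which is the contrapositive of (3). The delicate step is the passage "density vanishes at every dyadic rational $\Rightarrow$ density $\equiv 0$": one must run the self-similarity argument uniformly and handle the almost-sure quantifiers carefully (a single null set must work for a countable dense set of dyadic points simultaneously), and one must make sure the conditioning on non-extinction is propagated correctly through the subdivision.

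\textbf{Main obstacle.} The hardest part is part (3): turning the soft statement "$X=0$ a.s." into the geometric conclusion that the continuous projected density is identically zero, which requires carefully combining the self-similar structure with the continuity of the density and managing the exceptional null sets across a countable family of dyadic scales and positions. Parts (1) and (2) are standard martingale/self-similarity arguments once the combinatorial geometry of the slabs $I_n$ is set up explicitly.
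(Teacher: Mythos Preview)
Your overall strategy matches the paper's, and parts~(2) and~(3) are essentially the paper's arguments: the fixed-point equation $X\overset{\mathbf{d}}{=}2^{1-d}\sum_{j}W(j)X(j)$ rules out $X$ being a nonzero constant since $W\not\equiv 1$, and if $X=0$ a.s.\ then the projected density, being at each dyadic rational a linear combination of independent copies of $X$, vanishes on a dense set and so cannot be continuous and nonzero on non-extinction.

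The real gap is in part~(1), and it is not merely ``fiddly''. Your filtration $\G_n$ ``generated by the weights relevant to $I_n$ up to generation $n$'' does not make $X_n$ adapted: $X_n=2^n\mu(I_n)$ involves the \emph{limit} measure $\mu$, hence depends on $W_Q$ at \emph{all} levels inside $I_n$, not just levels $\le n$. The paper fixes this by taking instead $\mathcal{E}_n=\sigma\bigl(\mu(Q):Q\in\mathcal{D}_n\bigr)$, for which $X_n$ is trivially adapted. But then the martingale property does \emph{not} follow from ``a direct computation with $\E(W)=1$'' --- that identity drives the martingale $\mu_n$, not the sequence $2^n\mu(I_n)$. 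The paper instead uses a reflection symmetry: the map $F_n$ that reflects across the hyperplane $\pi^{-1}(2^{-n})$ inside $\pi^{-1}([0,2^{1-n}])$ and is the identity elsewhere permutes the dyadic tree below level $n-1$, so it preserves the law of $\mu$ conditional on $\mathcal{E}_{n-1}$. Hence the two halves of $I_{n-1}$ have equal conditional expected $\mu$-mass, and $\E(X_n\mid\mathcal{E}_{n-1})=X_{n-1}$ follows. This symmetry step is the missing idea in your sketch of~(1).
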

\begin{proof}
The key to \eqref{eq:lem1} is the observation that if the value of $\mu(Q)$ is known for some dyadic cube, and if $Q$ is split in half along a co-ordinate plane, then the expected measure of both halves is $\mu(Q)/2$. In more detail, let $\mathcal{E}_n$ denote the sigma-algebra generated by the random variables $\mu(Q)$, $Q\in\mathcal{D}_n$. Then each $X_n$ is $\mathcal{E}_n$-measurable. We derive the claim \eqref{eq:lem1} from the martingale convergence theorem by showing that $X_n$ is a martingale with respect to the filtration $\mathcal{E}_n$. To that end, let 
$Y_n=2^n \mu\left(\pi^{-1}([2^{-n},2^{-n+1}])\right)$ and let $F_n\colon[0,1]^d\to[0,1]^d$ be a map which is a reflection with respect to the plane $\pi^{-1}(2^{-n})$ on $\pi^{-1}([0,2^{-n+1}])$ and the identity elsewhere. Since the random variables $W_Q$, $Q\in\cup_{m\in\mathbb{N}}\mathcal{D}_m$, are independent and identically distributed, and each $F_n$ preserves the nested structure of the dyadic cubes, it follows that for each given $n$, the law of (the random variables $\mu_m$, $m\in\mathbb{N}$, and thus also of) $\mu$ are invariant under $F_n$. Moreover, since $F_n(Q)=Q$ if $Q\in\mathcal{D}_{n-1}$, each $\mathcal{E}_{n-1}$-measurable random variable is $F_n$-invariant and thus the law of $\mu$ is $F_n$-invariant also conditional on $\mathcal{E}_{n-1}$. Since $Y_n$ may be expressed as $Y_n=2^n \nu\left(\pi^{-1}([0,2^{-n}])\right)$ for $\nu=\mu\circ F_n$, we conclude that $\E(Y_n|\mathcal{E}_{n-1})=\E(X_n|\mathcal{E}_{n-1})$ and since
\begin{align*}
    2^{-n}X_n+2^{-n}Y_n=\mu\left(\pi^{-1}([0,2^{-n-1}])\right)=2^{-n-1}X_{n-1}\,,
\end{align*}
    it follows that 
    \[\E(X_n|\mathcal{E}_{n-1})=\frac12\left(\E(X_n|\mathcal{E}_{n-1})+\E(Y_n|\mathcal{E}_{n-1})\right)=X_{n-1}\,.\]
    
To prove \eqref{eq:lem2}, observe that, for all $n\in \N$, the following identity holds in distribution:
\begin{equation*}
X_{n+1}\overset{\mathbf{d}}{=} 2^{1-d}\sum_{j=1} ^{2^{d-1}} W(j)X_n(j)\,, 
\end{equation*}
where $W(j)$, $X_n(j)$, $j=1, \dots 2^{d-1}$, are independent and the $W(j)$, $X_n(j)$ are distributed according to $W$, $X_n$, respectively.
Letting $n\to\infty$, this implies that also 
\begin{equation}\label{eq:self-similar}
X\overset{\mathbf{d}}{=} 2^{1-d}\sum_{j=1} ^{2^{d-1}} W(j)X(j)\,, 
\end{equation}
where $W(j)$, $X(j)$, are independent and the $W(j)$, $X(j)$ are distributed according to $W$, $X$, respectively. Since $W$ is not a constant a.s., it is obvious that \eqref{eq:self-similar} cannot be satisfied if $X$ is almost surely a non-zero constant.

Let $h(x)$ denote the density of $\mu\circ\pi^{-1}$. To prove \eqref{eq:lem3}, we may assume it exists for all $x\in\mathbb{R}$. For $q\in (2^{-m}\N)\cap[0,1]$ and conditional on $\mathcal{F}_m$, $h(q)$ is a linear combination of independent copies of $X$. 
Thus, if $X=0$ almost surely, then a.s. $h(q)=0$ for all $q\in\mathbb{Q}\cap[0,1]$. Whence
    \[\PP\left(h\text{ is continuous and not identically zero}\right)=0\,.\]
    Consequently, if $X=0$ almost surely, then $h$ is discontinuous (or not everywhere defined) a.s. on non-extinction and thus $\dim_F\mu\le 2$ almost surely. 
\end{proof}

\begin{remark}
    Although it is not needed here, it can be shown that $X=0$ almost surely if and only if $\dim_H\mu\le 1$ almost surely or, equivalently, if the cascade measure in $[0,1]^{d-1}$ constructed with $W$ goes extinct almost surely, that is, if $\E(W\log W)\ge d-1$. 
\end{remark}

Noting that $h(t)=0$ for all $t<0$, it immediately follows from Lemma \ref{lem:X} that $\dim_F\mu\le 2$ with positive probability. For the sake of rigour, we provide the details on how to improve ‘positive probability' to ‘almost surely'.
Recall that our aim is to verify that, almost surely on non-extinction, the density of the projection $\mu\circ \pi^{-1}$ cannot exist as a continuous function on $\R$. The proof proceeds by showing that, for all dyadic rationals $q\in(2^{-m}\N)\cap[0,1]$, the probability that the density exists and is continuous at $q$ is uniformly bounded away from one, conditional on $q\in\spt\mu_m\circ\pi^{-1}$. 

To that end, by Lemma \ref{lem:X} \eqref{eq:lem3}, we may assume that $\PP(X=0)<1$. Let
\[c=\sup_{0\le x<\infty}\PP(X=x)\,.\]
Combining with Lemma \ref{lem:X} \eqref{eq:lem2}, we know that $c<1$. 
Let $I_L,I_R\in\D_m(\R)$ be consecutive dyadic intervals and let $q$ be their common endpoint. 
Let  $Q_1,\ldots, Q_{2^{(d-1)m}}$ be an enumeration of the cubes $Q\in\D_m$ for which  $Q\subset\pi^{-1}(I_R)$. Conditional on $\mu_m(\pi^{-1}(I_R))>0$, let
\[j=\min\left\{1\le i\le 2^{m(d-1)}\,:\,\mu_m(Q_i)>0\right\}\,.\]
Denote
$S_L=\pi^{-1}(I_L)$, $S_R=\pi^{-1}(I_R)$, and $S_{R,j}=\cup_{i=j+1}^{2^{(d-1)m}}Q_i$. Let 
\[Z_n=2^{n}\mu\left(Q_j\cap\pi^{-1}([q,q+2^{-n}]\right)\,.\]
Then, conditional on $\mathcal{F}_m$, $Z_n$ is distributed as $2^{m}\mu_m(Q_j)X_{n-m}$ for $n\ge m$. 
Since
\begin{align*}
&\mu\left(\pi^{-1}([q,q+2^{-n}]\right)=2^{-n}Z_n+\mu\left(S_{R,j}\cap\pi^{-1}([q,q+2^{-n}])\right)\,,\\
&\mu\left(\pi^{-1}([q-2^{-n},q]\right)=\mu\left(S_L\cap\pi^{-1}\left([q-2^{-n},q]\right)\right)\,,    
\end{align*}
we observe that if $Z_n$ has a limit, the density of $\mu\circ\pi^{-1}$ may be continuous at $q$ only if 
\begin{equation}\label{eq:Z_n}
\lim_{n\to\infty} Z_n=\lim_{n\to\infty} 2^{n}\left(\mu\left(S_L\cap\pi^{-1}\left([q-2^{-n},q]\right)\right)-\mu\left(S_{R,j}\cap\pi^{-1}([q,q+2^{-n}])\right)\right).\end{equation}
Conditioning on $j$ and on the sigma-algebra generated by the random variables $\mu_m(Q_j)$ and $\mu_{n}|_{S_L\cup S_{R,j}}$, $n\ge m$, determines the value of the right-hand side of \eqref{eq:Z_n}. Since $Z_n$, $n\ge m$, and $\mu\left(S_L\cap\pi^{-1}\left([q-2^{-n},q]\right)\right)-\mu\left(S_{R,j}\cap\pi^{-1}([q,q+2^{-n}])\right)$ are independent conditional on $\mathcal{F}_m$, we may use
Lemma \ref{lem:X} to conclude that
\begin{align}\label{eq:density_Q}
\PP\left(\text{The density of }\mu\circ\pi^{-1}\text{ is continuous at }q\,|\,\mu_m(\pi^{-1}(I_R))>0\right)\le c\,.
\end{align}
Let $m\in\N$. Conditional on $\mathcal{F}_m$, the events 
\[\text{The density of }\mu\circ\pi^{-1}\text{ is continuous at }q\,,\]
are independent for $q\in(2^{1-m}\N)\cap[0,1]$. 
Using the Borel-Cantelli lemma and \eqref{eq:density_Q} now implies that the event
\[\text{The density of }\mu\circ\pi^{-1}\text{ is continuous,}\]
has probability zero conditional on non-extinction.

\begin{remark}
    The assumption that $W$ is sub-exponential is not needed in the proof of Theorem \ref{thm:dimF2} and the above proof, in fact, works under the most general assumptions, i.e. $W\ge 0$ and $\E(W)=1$.
\end{remark}

\section{Spherical cascades}\label{sec:proofcircle}

In this final section, we prove Theorem \ref{thm:circle}. The following lemma is standard, see \cite[Theorem 14.3]{Mattila2015}. We denote by $\sigma$ the surface measure on the unit circle $S\subset\R^2$.

\begin{lemma}\label{lem:Van}
Let $J \subset S$ be an arc. Then
\begin{equation}
\left | \int_{J} e^{-2\pi i x\cdot \xi} d\sigma(x) \right|=O(|\xi|^{-1/2}), \quad \forall \xi \in \R^2\,,
\end{equation}
where the $O$-constant is independent of $J$.
\end{lemma}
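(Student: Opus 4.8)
The statement to prove is Lemma \ref{lem:Van}, a Van der Corput type estimate for the Fourier transform of arc-length measure restricted to an arc of the unit circle, with the decay constant uniform over all arcs $J$.

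My plan is to reduce to the one-dimensional oscillatory integral
\[
I(\xi) = \int_J e^{-2\pi i x\cdot\xi}\,d\sigma(x)
\]
by parametrizing the circle as $x(t)=(\cos 2\pi t,\sin 2\pi t)$, $t\in[a,b]\subset[0,1]$, so that $d\sigma = 2\pi\,dt$ and the integral becomes $2\pi\int_a^b e^{-2\pi i\,\phi(t)}\,dt$ with phase $\phi(t) = \xi_1\cos 2\pi t + \xi_2\sin 2\pi t$. Writing $\xi = |\xi|(\cos\theta,\sin\theta)$, this is $\phi(t) = |\xi|\cos(2\pi t - \theta)$, so after a harmless translation in $t$ we may take $\phi(t) = |\xi|\cos 2\pi t$. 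The point is that $\phi''(t) = -4\pi^2|\xi|\cos 2\pi t$ is large except near the two points where $\cos 2\pi t$ vanishes, while at those points $\phi'(t) = -2\pi|\xi|\sin 2\pi t$ is large; this is exactly the setup for Van der Corput's lemma.

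The key steps, in order: (i) for $|\xi|\le 1$ the trivial bound $|I(\xi)|\le \sigma(S) = 2\pi = O(|\xi|^{-1/2})$ suffices, so assume $|\xi|>1$; (ii) partition $[0,1]$ (or the relevant arc) into a bounded number — at most four — of subintervals on each of which \emph{either} $|\cos 2\pi t|\ge \tfrac{1}{\sqrt 2}$ (so $|\phi''|\gtrsim |\xi|$ and $\phi''$ does not change sign) \emph{or} $|\sin 2\pi t|\ge\tfrac{1}{\sqrt 2}$ (so $|\phi'|\gtrsim|\xi|$ and $\phi'$ is monotone); (iii) intersect $J$ with each of these fixed subintervals and apply the standard Van der Corput estimate — the second-derivative version $|\int e^{i\lambda\psi}| \le C\lambda^{-1/2}$ when $|\psi''|\ge 1$ and $\psi''$ monotone, and the first-derivative version $|\int e^{i\lambda\psi}|\le C\lambda^{-1}$ when $|\psi'|\ge 1$ and $\psi'$ monotone — on each piece, noting that these estimates hold with a constant depending only on the number of monotonicity intervals of the relevant derivative, hence independent of the endpoints of $J\cap(\text{subinterval})$; (iv) sum the (at most four) contributions, each of which is $O(|\xi|^{-1/2})$, to conclude. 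Since the decomposition in (ii) depends only on $\theta$ through the rotation and uses fixed thresholds, and Van der Corput's constants are absolute, the final $O$-constant is independent of $J$.

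The only mildly delicate point — and the main thing to be careful about — is the \emph{uniformity in $J$}: one must invoke the version of Van der Corput's lemma whose constant is genuinely absolute, i.e. the statement that $\big|\int_c^d e^{i\lambda\psi(t)}\,dt\big|\le C\lambda^{-1/2}$ for any interval $[c,d]$ provided $\psi''\ge 1$ and $\psi''$ is monotone on $[c,d]$ (and the analogous first-derivative statement), with $C$ not depending on $[c,d]$. This is precisely \cite[Theorem 14.3]{Mattila2015} (equivalently, the classical Van der Corput lemma, e.g. in Stein's book), so in practice the cleanest route is to cite that result directly after performing the elementary parametrization and the fixed partition of the parameter interval described above; no genuinely new estimate is required. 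I do not expect any real obstacle here beyond bookkeeping.
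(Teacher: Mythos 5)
Your argument is correct and matches the paper's route: the paper simply cites the classical Van der Corput/stationary-phase estimate (\cite[Theorem 14.3]{Mattila2015}), and your parametrization, fixed four-fold (plus monotonicity) partition, and application of the first- and second-derivative tests with absolute constants is precisely the standard proof behind that citation, with the uniformity in $J$ handled correctly.
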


Recall that the cascade measure $\mu$ on the unit circle $S$ is defined as the push-forward $\widetilde{\mu}\circ f^{-1}$, where $f\colon[0,1]\to S$, $t\mapsto (\cos (2\pi t),\sin(2\pi t))$ and $\widetilde{\mu}$ is the cascade measure on the unit interval.

\begin{proof}[Proof of Theorem \ref{thm:circle}]

Since the correlation dimension is obviously invariant under $f$, from Lemma \ref{lem:dim_2} we infer that $\dim_2\mu=\dim_2\widetilde{\mu}=\alpha(W)$ almost surely on non-extinction. Moreover, also 
\[\lim_{n\rightarrow \infty} \frac{\log \sum_{I\in\widetilde{\D_n}} \mu_n(I)^2}{-n}=\alpha(W)\,,\]
where $\widetilde{\D_n}=f(\D_n)$ denotes the level $n$ dyadic arcs on $S$.

It remains to verify the lower bound for the Fourier dimension. The probabilistic part of the proof is very similar to the proof of Theorem \ref{thm:main} with only minimal changes: Let $\tau<\alpha(W)$. Introducing a parameter $R\ge 1$, we argue as in the proof of \eqref{eq:prob-part} and infer that almost surely 
\begin{equation}\label{eq:Fbnd}
|\widehat{\mu_{n+1}}(\xi)-\widehat{\mu_n}(\xi)|=O(1) 2^{-\tau n/2}, \quad \forall |\xi|\le 2^{nR}\,,
\end{equation}
where the $O$-constant is random but independent of $n$ and $\xi$. Indeed, the parameter $R$ only affects the exponent of $2^n$ in the upper bound for $\PP(B_n)$, see \eqref{eq:B_n}.

Using Lemma \ref{lem:Van}, we infer
\begin{equation}\label{eq:corput}
\begin{split}
\widehat{\mu_n}(\xi)&= \sum_{I\in \widetilde{\D_n}} \int_I e^{-2\pi i x\cdot\xi}\mu_n(x)\,d\sigma(x)
\\
&=O(1)2^n|\xi|^{-1/2}\sum_{I\in\widetilde{\D_n}}\mu_n(I)\\
&=O(1)2^n|\xi|^{-1/2}\,,
\end{split}
\end{equation}
where the last estimate holds since the total mass martingale $\mu_n(S)$ is almost surely bounded.

Note that the estimate \eqref{eq:corput} holds simultaneously for all $\xi\in\R^2$ and, likewise, \eqref{eq:Fbnd} is valid for all $|\xi|\le 2^{nR}$. For each $\xi\in\R^2$, we pick $n_\xi$ so that $2^{n_\xi R}\le|\xi|<2^{(n_\xi+1)R}$. Combining \eqref{eq:corput} and \eqref{eq:Fbnd} and telescoping with $R=2+\tau$ implies
\begin{align*}
|\widehat{\mu_{m}}(\xi)|&\le|\widehat{\mu_{n_\xi}}(\xi)|+\sum_{n=n_\xi}^{m}|\widehat{\mu_{n+1}}(\xi)-\widehat{\mu_n}(\xi)|\\
&=O\left (2^{n_\xi} |\xi|^{-1/2} \right )+\sum_{n=n_\xi}^{m} O\left(2^{-\tau n/2}\right)\\
&=O\left (2^{n_\xi}\right) |\xi|^{-1/2}+O(2^{-n_\xi\tau/2})\\
&=O(|\xi|^{-\tau/(4 +2\tau)})\,,
\end{align*}
for all $m\ge n_\xi$. Since $\tau<\alpha(W)$ is arbitrary, letting $m\to\infty$, this implies that $\dim_F\mu\ge\alpha(W)/(2+\alpha(W))$ almost surely on non-extinction and thus completes the proof.
\end{proof}

\begin{remark}
For certain monofractal constructions including the fractal percolation, Ryou \cite{Ryou2024} has shown that the natural measures, when lifted to the parabola, are Salem. It seems plausible that his method may be used to improve on the lower bound in Theorem \ref{thm:circle}.
\end{remark}

\section*{Acknowledgements} We are grateful to Meng Wu for bringing this question to our attention, as well as for many useful discussions on the topic. We thank an anonymous referee for a professional and very useful review that led to many improvements.

This work is dedicated to the memory of Junxian Li, the beloved father of Bing Li.

\subsection*{Financial support} C.C. and B.L.~were partially supported by National Key R\&D Program of China (No.2024YFA1013700), NSFC (No.12101002,12271176) and Guangdong Natural Science Foundation 2024A1515010946.

\subsection*{Competing interests} We affirm that there are no conflicts of interest pertaining to this manuscript.

\subsection*{Data Availability}
Data availability does not apply to this work, as no data was created or analysed in this study.


\begin{thebibliography}{10}

\bibitem{Aidekon2013}
E.~A\"id\'ekon.
\newblock Convergence in law of the minimum of a branching random walk.
\newblock {\em Ann. Probab.}, 41(3A):1362--1426, 2013.

\bibitem{Barral2000}
J.~Barral.
\newblock Continuity of the multifractal spectrum of a random statistically self-similar measure.
\newblock {\em J. Theoret. Probab.}, 13(4):1027--1060, 2000.

\bibitem{FengBarral2018}
J.~Barral and D.-J. Feng.
\newblock Projections of planar {M}andelbrot random measures.
\newblock {\em Adv. Math.}, 325:640--718, 2018.

\bibitem{BarralConcalves2011}
J.~Barral and P.~Gon\c{c}alves.
\newblock On the estimation of the large deviations spectrum.
\newblock {\em J. Stat. Phys.}, 144(6):1256--1283, 2011.

\bibitem{Barraletal2013}
J.~Barral, X.~Jin, R.~Rhodes, and V.~Vargas.
\newblock Gaussian multiplicative chaos and {KPZ} duality.
\newblock {\em Comm. Math. Phys.}, 323(2):451--485, 2013.

\bibitem{Barraletal2014}
J.~Barral, A.~Kupiainen, M.~Nikula, E.~Saksman, and C.~Webb.
\newblock Critical {M}andelbrot cascades.
\newblock {\em Comm. Math. Phys.}, 325(2):685--711, 2014.

\bibitem{BenjaminiSchramm2009}
I.~Benjamini and O.~Schramm.
\newblock K{PZ} in one dimensional random geometry of multiplicative cascades.
\newblock {\em Comm. Math. Phys.}, 289(2):653--662, 2009.

\bibitem{CHQW}
X.~Chen, Y.~Han, Y.~Qiu, and Z.~Wang.
\newblock {H}armonic {A}nalysis of {M}andelbrot {C}ascades in the context of vector-valued measures.
\newblock Preprint, available at arXiv:2409.13164.

\bibitem{ColletKoukiou1992}
P.~Collet and F.~Koukiou.
\newblock Large deviations for multiplicative chaos.
\newblock {\em Comm. Math. Phys.}, 147(2):329--342, 1992.

\bibitem{FalconerJin2019}
K.~Falconer and X.~Jin.
\newblock Exact dimensionality and projection properties of {G}aussian multiplicative chaos measures.
\newblock {\em Trans. Amer. Math. Soc.}, 372(4):2921--2957, 2019.

\bibitem{FalconerTroscheit2023}
K.~J. Falconer and S.~Troscheit.
\newblock Box-counting dimension in one-dimensional random geometry of multiplicative cascades.
\newblock {\em Comm. Math. Phys.}, 399(1):57--83, 2023.

\bibitem{GarbanVargas}
C.~Garban and V.~Vargas.
\newblock Harmonic analysis of {G}aussian multiplicative chaos on the circle.
\newblock Preprint, available at arXiv:2311.04027.

\bibitem{Grafakos08}
L.~Grafakos.
\newblock {\em Classical {F}ourier analysis}, volume 249 of {\em Graduate Texts in Mathematics}.
\newblock Springer, New York, second edition, 2008.

\bibitem{Heurteaux2016}
Y.~Heurteaux.
\newblock An introduction to {M}andelbrot cascades.
\newblock In {\em New trends in applied harmonic analysis}, Appl. Numer. Harmon. Anal., pages 67--105. Birkh\"{a}user/Springer, Cham, 2016.

\bibitem{HK1997}
B.~R. Hunt and V.~Y. Kaloshin.
\newblock How projections affect the dimension spectrum of fractal measures.
\newblock {\em Nonlinearity}, 10(5):1031--1046, 1997.

\bibitem{Jaffuel2012}
B.~Jaffuel.
\newblock The critical barrier for the survival of branching random walk with absorption.
\newblock {\em Ann. Inst. Henri Poincar\'e{} Probab. Stat.}, 48(4):989--1009, 2012.

\bibitem{Kahane1993}
J.-P. Kahane.
\newblock Fractals and random measures.
\newblock {\em Bull. Sci. Math.}, 117(1):153--159, 1993.

\bibitem{KahanePeyriere1976}
J.-P. Kahane and J.~Peyri\`ere.
\newblock Sur certaines martingales de {B}enoit {M}andelbrot.
\newblock {\em Advances in Math.}, 22(2):131--145, 1976.

\bibitem{Lyons1995}
R.~Lyons.
\newblock Seventy years of {R}ajchman measures.
\newblock In {\em Proceedings of the {C}onference in {H}onor of {J}ean-{P}ierre {K}ahane ({O}rsay, 1993)}, pages 363--377, 1995.

\bibitem{Madaule2017}
T.~Madaule.
\newblock Convergence in law for the branching random walk seen from its tip.
\newblock {\em J. Theoret. Probab.}, 30(1):27--63, 2017.

\bibitem{Mandelbrot1974}
B.~Mandelbrot.
\newblock Intermittent turbulence in self similar cascades: divergence of high moments and dimension of carrier.
\newblock {\em J. Fluid Mech.}, 62:331--333, 1974.

\bibitem{Mandelbrot1976}
B.~Mandelbrot.
\newblock Intermittent turbulence and fractal dimension: kurtosis and the spectral exponent {$5/3+B$}.
\newblock In {\em Turbulence and {N}avier-{S}tokes equations ({P}roc. {C}onf., {U}niv. {P}aris-{S}ud, {O}rsay, 1975)}, volume Vol. 565 of {\em Lecture Notes in Math.}, pages 121--145. Springer, Berlin-New York, 1976.

\bibitem{Mattila2015}
P.~Mattila.
\newblock {\em Fourier analysis and {H}ausdorff dimension}, volume 150 of {\em Cambridge Studies in Advanced Mathematics}.
\newblock Cambridge University Press, Cambridge, 2015.

\bibitem{Molchan1996}
G.~M. Molchan.
\newblock Scaling exponents and multifractal dimensions for independent random cascades.
\newblock {\em Comm. Math. Phys.}, 179(3):681--702, 1996.

\bibitem{Ryou2024}
D.~Ryou.
\newblock Near-optimal restriction estimates for {C}antor sets on the parabola.
\newblock {\em Int. Math. Res. Not. IMRN}, (6):5050--5099, 2024.

\bibitem{Sahlsten}
T.~Sahlsten.
\newblock Fourier transforms and iterated function systems.
\newblock Preprint, available at arXiv:2311.00585.

\bibitem{ShmerkinSuomala2018}
P.~Shmerkin and V.~Suomala.
\newblock Spatially independent martingales, intersections, and applications.
\newblock {\em Mem. Amer. Math. Soc.}, 251(1195):v+102, 2018.

\bibitem{Vershynin2018}
R.~Vershynin.
\newblock {\em High-dimensional probability}, volume~47 of {\em Cambridge Series in Statistical and Probabilistic Mathematics}.
\newblock Cambridge University Press, Cambridge, 2018.
\newblock An introduction with applications in data science, With a foreword by Sara van de Geer.

\end{thebibliography}
\end{document}